\renewcommand{\algorithmcfname}{ALGORITHM}
\newtheorem{theorem}{Theorem}
\newtheorem{lemma}{Lemma}
\newtheorem{corollary}{Corollary}
\begin{document}

\markboth{J. Erway and M. Rezapour}{SC-SR1: MATLAB Software for Solving Shape-Changing L-SR1 Trust-Region Subproblems}

\title[A New Multipoint  Symmetric  Secant Method with a Dense Initial Matrix]
      {A New Multipoint  Symmetric  Secant Method with a Dense Initial Matrix}

\author[J. Erway]{Jennifer B. Erway}
\email{erwayjb@wfu.edu}
\address{Department of Mathematics, Wake Forest University, Winston-Salem, NC 2\
7109}

\author[M. Rezapour]{Mostafa Rezapour}
\email{rezapom@wfu.edu}
\address{Department of Mathematics, Wake Forest University, Winston-Salem, NC 2\
7109}

\thanks{J.~B. Erway is supported in part by National Science Foundation grant
IIS-1741264.}

\begin{abstract}
In large-scale optimization, when either forming or storing Hessian
matrices are prohibitively expensive, quasi-Newton methods are often
used in lieu of Newton's method because they only require first-order
information to approximate the true Hessian.  Multipoint symmetric
secant ({\small MSS}) methods can be thought of as generalizations of
quasi-Newton methods in that they attempt to impose additional
requirements on their approximation of the Hessian.  Given an initial
Hessian approximation, {\small MSS} methods generate a sequence of
possibly-indefinite matrices using rank-2 updates to solve nonconvex 
unconstrained optimization problems.  For practical reasons, up to now, the
initialization has been a constant multiple of the identity matrix.
In this paper, we propose a new limited-memory {\small MSS} method for
large-scale nonconvex optimization that
allows for dense initializations.  Numerical results on the
{\small CUTE}st test problems suggest that the {\small MSS} method using
a dense initialization outperforms the standard initialization.
Numerical results also suggest that this approach is competitive with
both a basic L-SR1 trust-region method and an L-PSB method.
\end{abstract}

\keywords{
Quasi-Newton methods;  large-scale optimization;  nonlinear optimization;  trust-region methods }

\maketitle
\makeatletter
\newcommand{\defined}{\mathop{\,{\scriptstyle\stackrel{\triangle}{=}}}\,}

\newcommand{\rfm}[1]{\textcolor{black}{#1}}
\newcommand{\obs}[1]{\textcolor{black}{#1}}
\newcommand{\je}[1]{\textcolor{black}{#1}}
\newcommand{\jeo}[1]{\textcolor{black}{#1}}
\newcommand{\jb}[1]{\textcolor{black}{#1}}
\newcommand{\jbo}[1]{\textcolor{black}{#1}}
\renewcommand{\algorithmcfname}{ALGORITHM}

\newcommand{\bp}{\mathbf{p}}
\newcommand{\bv}{\mathbf{v}}
\newcommand{\bfm}[1]{\mathbf{#1}} 
\newcommand{\bk}[1]{\mathbf{#1}_k} 
\newcommand{\bko}[1]{\mathbf{#1}_{k+1}} 
\newcommand{\bsk}[1]{\boldsymbol{#1}_k} 
\newcommand{\bsko}[1]{\boldsymbol{#1}_{k+1}} 
\newcommand{\bs}[1]{\boldsymbol{#1}} 

\newcounter{pseudocode}[section]
\def\thepseudocode{\thesection.\arabic{pseudocode}}
\newenvironment{pseudocode}[2]%
        {%
        \refstepcounter{pseudocode}%
          \AlgBegin %
               {{\bfseries Algorithm \thepseudocode.}\rule[-1.25pt]{0pt}{10pt}#1}%
        #2}%
           {\AlgEnd}

\newcounter{Pseudocode}[section]
\def\thePseudocode{\thesection.\arabic{Pseudocode}}
\newenvironment{Pseudocode}[2]%
        {%
        \refstepcounter{Pseudocode}%
          \AlgBegin %
               {{\bfseries #1.}\rule[-1.25pt]{0pt}{10pt}}%
        #2}%
           {\AlgEnd}

\def\tnu{\tilde{\nu}}

\newcounter{procedureC}
\newcounter{algorithm saved}
\newenvironment{procedureAlg}[1][H]{%
	\setcounter{algorithm saved}{\value{algocf}} 
    	\setcounter{algocf}{\value{procedureC}}
	\renewcommand{\algorithmcfname}{PROCEDURE}
   \begin{algorithm}[#1]%
  }{\end{algorithm}
  \setcounter{procedureC}{\value{algocf}}
  \setcounter{algocf}{\value{algorithm saved}} 
  }

\makeatother

\section*{Dedication}
\emph{This paper is dedicated to Oleg Burdakov who passed away June 1, 2021.}
\section{Introduction} 
We consider nonconvex large-scale unconstrained optimization problems of the form
\begin{equation}\label{eqn-min}
\min f(x),\end{equation} where $f:\Re^n\rightarrow \Re$ is a twice-continuously
differentiable function.  In this setting, second-order methods are
undesirable when the Hessian is too computationally expensive to
compute or store.  In this paper, we consider the first-order
multipoint symmetric secant 
({\small MSS}) method~\cite{burdakov1983methods} and its limited-memory
extension~\cite{burdakov2002limited}. We propose a new formulation of
the limited-memory {\small MSS} method that allows
for a dense initialization in place of the usual scalar multiple of
the identity matrix.  The dense initialization
does not significantly increase storage costs; in fact,
the dense initialization only requires storing one extra scalar.

\bigskip

Multipoint symmetric secant methods can be thought of as generalizations
of quasi-Newton methods. Quasi-Newton methods generate a sequence of
possibly-indefinite matrices $\{B_k\}$ to approximate the true Hessian of $f$. These methods build
Hessian approximations using
\emph{quasi-Newton pairs} $\{s_k,y_k\}$ that satisfy the \emph{secant
condition} $B_{k+1}s_k=y_k$ for all $k$.  In contrast, {\small MSS} methods seek to impose the secant condition on
\emph{all} the stored pairs: $B_kS_k=Y_k$ for all $k$, where
$$
S_k=\left[ s_{k-1}\, s_{k-2} \, \ldots \,s_0\right] \quad \text{and}
\quad Y_k=\left[ y_{k-1} \,\, y_{k-2} \,\, \ldots y_0
  \right].
$$
It turns out that imposing this requirement
while insisting $B_k$ is a symmetric matrix is, generally speaking, not
possible.  The need to relax these conditions  becomes apparent when
one considers that if $B_kS_k=Y_k$ then it follows that
$S_k^TB_kS_k=S_k^TY_k$, giving that if $B_k$ is symmetric then $S_k^TY_k$
must be symmetric~\cite{schnabel1983quasi}. Unfortunately, $S_k^TY_k$ is usually not symmetric.
Instead of strictly imposing both the multiple secant conditions and symmetry, some of these conditions must be relaxed.

One way to relax these conditions is to ensure the
multiple secant conditions hold but not enforce symmetry.
In~\cite{schnabel1983quasi}, Schnabel generalizes the
Powell-Symmetric-Broyden ({\small PSB}), Broyden-Fletcher-Goldfarb-Shanno
({\small BFGS}), and the Davidon-Fletcher-Powell ({\small DFP})
updates to satisfy $B_kS_k=Y_k$.   However, in  each case,
these generalizations may not  be symmetric and there are conditions
that must be met for the updates to be well-defined.  Alternatively,
one  may relax the multiple secant equations but enforce symmetry.
An  example of this is found in the same
paper when Schnabel proposes perturbing $Y_k$ to obtain a symmetric matrix.

Another type of relaxation takes the form of an approximation of the
true $S_k^TY_k$ using a symmetrization of this matrix.
In~\cite{burdakov1983methods}, the following symmetrization is
proposed:
$$\text{sym}(A)=\left\{
\begin{array}{l}
  A_{ij}, \,\, i\ge j \\
   A_{ji}, \,\, i<j. 
\end{array} \right. $$
With this symmetrization, a {\small MSS} method (and its
limited-memory version described in~\cite{burdakov2002limited}) is a symmetric method,
generating a sequence of symmetric Hessian approximations. (For more details on this symmetrization, see~\cite{burdakov1983methods,burdakov2002limited}.)
A crucial
aspect of these {\small MSS} methods  is that they generate
a sequence of matrices that are not necessarily positive definite.  It
is for this reason that these methods are incorporated into
trust-region methods.  The update formula found
in~\cite{burdakov1983methods,burdakov2002limited} is a rank-2
update formula.  As with traditional quasi-Newton methods, in order to
maintain the low-memory advantages of limited-memory versions of the
methods, a scalar multiple of the identity matrix is used to initialize
the sequence of matrices. (For the duration of the paper, this initialization will be referred to as a ``one-parameter
initialization,'' with the parameter being chosen as a scalar.)  It is
  worth noting that it is possible to use general diagonal matrices as an initialization
  and maintain the low-memory advantage; however, constant diagonal initializations are
  the most popular approach due to their ease of use.
\medskip

In this paper, we propose using a \emph{compact formulation} for the
sequence
of matrices produced by the {\small MSS} method found in~\cite{burdakov1983methods,burdakov2002limited}
to allow the use of two parameters to 
define the initial approximate Hessian.  This initialization exploits
the form of the compact formulation to create a dense initialization
when the two parameters are different.  (It is worth noting that when
the two parameters are chosen to be the same, the initialization simplifies
to the traditional one-parameter initialization.)  
This work is inspired by
the positive results found using a two-parameter initialization for the 
{\small BFGS}  update~\cite{denseInit}.  We demonstrate that the
eigenspace associated with all the eigenvalues is unchanged using two
parameters in lieu of the usual single parameter.  Additionally, some
motivation for good choices of initial parameters are presented.

\medskip

This paper is organized in five sections.  In Section 2, we review
the compact formulation for the limited-memory {\small MSS} matrix.  In Section
3, we derive the proposed limited-memory {\small MSS} method.  This proposed
method is incorporated into a traditional trust-region method in
Section 4.  Numerical comparisons with the {\small MSS} method using
traditional initializations on large unconstrained problems from the
{\small CUTE}st test collection~\cite{cutest} are given in Section 5.
Finally, Section 6 includes some concluding remarks and observations.

\subsection{Notation and Glossary.}
Unless explicitly indicated, $\| \cdot \|$ denotes the Euclidean two-norm
or its subordinate matrix norm.  The symbol $e_i$ denotes the
$i^{\text{th}}$ canonical basis vector whose dimension 
depends on context.

\subsection{Dedication.}
The idea to use a dense initialization for the {\small MSS} method is
due to Oleg Burdakov. An initial conversation with him about this work
occurred in early 2021 and several follow-up emails were exchanged.
Unfortunately, Oleg passed away before the first draft of this paper
was written. We dedicate this paper to him.

\section{Background}
In this section, we review how {\small MSS} matrices have been
constructed, their compact formulation, and a brief overview of how
they can be used in
{\small MSS} methods for unconstrained optimization.

\subsection{MSS matrices}
Given a sequence of iterates $\{x_i\}$, we define 
\begin{equation}\label{eq-SY}
S_k=\left[ s_{k-1}\, s_{k-2} \, \ldots \,s_0\right] \quad \text{and}
\quad Y_k=\left[ y_{k-1} \,\, y_{k-2} \,\, \ldots y_0 \right],
\end{equation}
where $s_i\defined x_{i+1}-x_i$ and $y_i\defined \nabla
f(x_{i+1})-\nabla f(x_i)$ for $i=0, \ldots, k$.  For simplicity we
assume $S_k$ has full column rank.  (The case when it does not have
full column rank is discussed in Section 5.)  The multiple secant
conditions used to define a multipoint symmetric secant method are
\begin{equation}\label{Dec301}
B_kS_k=Y_k.
\end{equation}
As noted in the introduction both symmetry and the multiple secant conditions cannot usually be enforced.  In this work, the multiple secant conditions are relaxed by applying the symmetrization transformation~\cite{burdakov1983methods,burdakov2002limited}:
\begin{align}\label{ON}
\text{sym}(A)=\begin{cases} 
      A_{ij}& \text{ if } i\geq j,\\
      A_{ji}& \text{ if } i< j, \\
   \end{cases}
\end{align}
to $S_k^TY_k$ to guarantee symmetry.
Suppose we write
\begin{equation}\label{eqn-SYdecompose}
  S_k^TY_k=L_k+E_k+T_k,
\end{equation}
where $L_k$ is the strictly lower triangular, $E_k$ is the diagonal,
and $T_k$ is the strictly upper triangular
part of $S_k^TY_k$.  With this notation,
$\text{sym}\left(S_k^TY_k\right)=L_k+E_k+L_k^T$.
It can be shown that the {\small MSS}  matrix $B_k$ satisfies
\begin{equation}\label{eq:b4}
S_k^TB_kS_k=\text{ $\text{sym}$}(S_k^TY_k),
\end{equation}
for all $0<k\le n$ (see~\cite{brust2018large}).

With this relaxation, the recursion formula to generate {\small MSS} matrices is a rank-2 formula
given by
\begin{equation}\label{eqn-rank2}
B_{k+1}=B_k+\frac{(y_k-B_ks_k)c_k^T+c_k(y_k-B_ks_k)^T}{s_k^Tc_k}
-\frac{(y_k-B_ks_k)^Ts_kc_kc_k^T}{(s_k^Tc_k)^2},\end{equation}
where $c_k\in\Re^n$ is any vector such that $c_k^Ts_i=0$ for all $0\le i <k$
and $c_k^Ts_k\ne 0$~\cite{burdakov1983methods}.   Interestingly, if $c$ is replaced by $\alpha c$ for any
$\alpha\ne 0$, $B_{k+1}$ 
in (\ref{eqn-rank2}) does not change~\cite{burdakov1983methods}.  Thus, $B_{k+1}$ is invariant with
respect to the scaling of $c$.  
The recursion relation (\ref{eqn-rank2}), absent the additional
orthogonality requirements on $c_k$, is a well-known
recursion~\cite{dennis77} that can be used to derive the Powell symmetric
Broyden ({\small PSB}) and the Davidon-Fletcher-Powell ({\small DFP})
update.  Specifially, when $c_k=s_k$ or $c_k=y_k$, the updates are
exactly the {\small PSB} and {\small DFP} update, respectively.  Similar to the {\small PSB} update,
the {\small MSS} update can yield indefinite matrices.

\medskip

The choice of the vectors $\{c_k\}$ are not unique, and thus, the sequence
$\{B_k\}$ is not unique.  In~\cite{burdakov2002limited,brust2018large},
the following choice is made for $c_k$ so  that
$\{B_k\}$ satisfies a least-change problem:
$$c_k \defined \left[I-S_{k}\left(S_{k}^TS_{k}\right)^{-1}
  S_{k}^T\right]s_k.$$
For the duration of this paper, we assume this choice for $\{c_k\}$.
For more details, see~\cite{burdakov1983methods,burdakov2002limited}. 
In the following lemma, we show that despite the relaxation, the original
secant condition is still satisfied.

\begin{lemma}\label{lemma-secant}
{\small MSS} matrices satisfy the secant condition $B_{k+1}s_k=y_k$.
\end{lemma}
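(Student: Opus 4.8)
The plan is to prove the claim by direct substitution into the rank-2 recursion (\ref{eqn-rank2}), since the secant condition $B_{k+1}s_k=y_k$ only concerns the action of the new matrix on the single vector $s_k$. First I would introduce the shorthand $r_k \defined y_k - B_k s_k$ for the secant residual, so that (\ref{eqn-rank2}) becomes
$$B_{k+1}=B_k+\frac{r_k c_k^T+c_k r_k^T}{s_k^Tc_k}-\frac{(r_k^Ts_k)\,c_kc_k^T}{(s_k^Tc_k)^2}.$$
Note that the denominators are well defined precisely because $c_k$ is chosen so that $s_k^Tc_k\ne 0$; the argument uses only this fact and not the particular least-change choice of $c_k$.

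Next I would multiply this identity on the right by $s_k$ and evaluate each of the three resulting terms, being careful to track which factors are scalars. Writing $\sigma\defined s_k^Tc_k=c_k^Ts_k\ne 0$, the two rank-one corrections produce
$$B_{k+1}s_k = B_k s_k + r_k\,\frac{c_k^Ts_k}{\sigma} + c_k\,\frac{r_k^Ts_k}{\sigma} - \frac{(r_k^Ts_k)(c_k^Ts_k)}{\sigma^2}\,c_k.$$
The first correction collapses to $r_k$ because $c_k^Ts_k=\sigma$, and in the third correction the factor $(c_k^Ts_k)/\sigma^2$ reduces to $1/\sigma$. The key observation is that the two remaining terms proportional to $c_k$ are the same scalar $(r_k^Ts_k)/\sigma$ times $c_k$ and therefore cancel exactly, leaving
$$B_{k+1}s_k = B_k s_k + r_k = B_k s_k + (y_k - B_k s_k) = y_k.$$

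The main obstacle is bookkeeping rather than a genuine difficulty: one must correctly distinguish the scalars $c_k^Ts_k$ and $r_k^Ts_k$ from the vector $c_k$ when expanding the outer products, since a misplaced transpose would spoil the cancellation. It is worth emphasizing that the conclusion holds for \emph{any} admissible $c_k$, i.e.\ any vector with $s_k^Tc_k\ne 0$, which is consistent with the earlier remark that $B_{k+1}$ is invariant under rescaling of $c_k$.
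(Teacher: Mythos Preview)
Your proof is correct and follows essentially the same approach as the paper: both substitute $s_k$ directly into the rank-2 update (\ref{eqn-rank2}), use $c_k^Ts_k=s_k^Tc_k$ to collapse the first correction to $y_k-B_ks_k$, and observe that the two remaining $c_k$-terms cancel. Your use of the shorthands $r_k$ and $\sigma$ and the remark that only $s_k^Tc_k\ne 0$ is needed are nice expository touches, but the underlying argument is identical to the paper's.
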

\begin{proof}{Lemma \ref{lemma-secant}}
  The proof makes use of the rank-2 formula (\ref{eqn-rank2}):
$$\begin{array}{lcl}
B_{k+1}s_k &=& \left[B_k+\frac{(y_k-B_ks_k)c_k^T+c_k(y_k-B_ks_k)^T}{s_k^Tc_k}
  -\frac{(y_k-B_ks_k)^Ts_kc_kc_k^T}{(s_k^Tc_k)^2}\right]s_k\\
&  & B_ks_k+\frac{(y_k-B_ks_k)c_k^Ts_k+c_k(y_k-B_ks_k)^Ts_k}{s_k^Tc_k}
-\frac{(y_k-B_ks_k)^Ts_kc_kc_k^Ts_k}{(s_k^Tc_k)^2}\\
& = & B_ks_k + (y_k-B_ks_k)+\frac{c_k(y_k-B_ks_k)^Ts_k}{s_k^Tc_k}
-\frac{(y_k-B_ks_k)^Ts_kc_k}{(s_k^Tc_k)}\\
& = & y_k.
  \end{array}$$
\end{proof}

\subsection{Compact formulation and the limited-memory case}
Given an initial $B_0$, 
Brust~\cite{brust2018large} derives the compact formulation $B_k=B_0+\Psi_kM_k\Psi_k^T$, where
\begin{equation}\label{eqn-cptB}
  \Psi_k\defined \begin{bmatrix} S_k & (Y_k-B_0S_k)\end{bmatrix} \quad \text{and}
  \quad
  M_k\defined \begin{bmatrix} W(S_k^TB_0S_k-(T_k+E_k+T_k^T))W & W \\ W & 0\end{bmatrix},
\end{equation}
$W\defined (S_k^TS_k)^{-1}$, and $L_k$, $E_k$, and $T_k$ are defined as in (\ref{eqn-SYdecompose}).
It is worth noting that $B_0S_k$ in (\ref{eqn-cptB}) is not difficult to
compute when $B_0$ is the dense matrix proposed in this paper (see Section 5 for details).  

\medskip

Limited-memory {\small MSS} matrices are obtained by storing no more than
$m$ of the most recently-computed quasi-Newton pairs.  As in typical
limited-memory quasi-Newton methods, $m$ is typically less than 10 (e.g.,
Byrd et al.~\cite{Representations} suggest $m\in[3,7]$).  In the limited-memory 
case, the compact formulation given in (\ref{eqn-cptB}) holds but with $S_k$ and $Y_k$ defined to contain at most $m$ vectors where $m\ll n$, i.e.,
\begin{equation}\label{eq-lSY}
S_k=\left[ s_{k-1}\, s_{k-2} \, \ldots \,s_{k-l}\right]\in\Re^{n\times l} \quad \text{and}
\quad Y_k=\left[ y_{k-1} \,\, y_{k-2} \,\, \ldots y_{k-l} \right]\in\Re^{n\times l},
\end{equation}
where $l\le m$.  In the compact representation (\ref{eqn-cptB}), $M_k$ is at most size $2m\times 2m$, making it a small enough matrix to perform linear algebra operations that are ill-suited for larger (e.g., $n\times n$) matrices.
For the duration of the paper, we assume the {\small MSS} method is
the  limited-memory version where $S_k$ and $Y_k$ are defined by (\ref{eq-lSY}).
\subsection{MSS methods}

The {\small MSS} matrices used in this work have been proposed to solve systems of 
equations~\cite{burdakov1983methods}, bound constrained optimization
problems~\cite{burdakov2002limited}, and unconstrained
optimization~\cite{brust2018large} problems.  In this section, we review how
they can be used to solve (\ref{eqn-min}).
Because {\small MSS} matrices can be indefinite, it is natural to use
this sequence of matrices as approximations to the true Hessian in a
trust-region method for solving nonconvex optimization problems.
We briefly review trust-region methods here, but further details
can be found in standard optimization textbooks (see, e.g.,~\cite{nocedal2006numerical}).

\medskip

At each iterate, trust-region methods use a quadratic approximation $m_k$
to model $f$ at the current iterate in a small region neighborhood about the current iterate $x_k$:
\begin{equation}\label{model}
m_k (x_k + s ) = f(x_k) +g^T_k s +\dfrac{1}{2} s^T B_ks,
\end{equation}
where $g_k=\nabla f(x_k)$ and $B_k\approx \nabla^2 f(x_k)$.  At each iteration, trust-region methods solve the so-called \emph{trust-region subproblem} to obtain a new search direction $s_k\in\Re^n$:
\begin{equation}\label{subproblem}
  s_k=\arg\min\limits_{\| s \| \leq   \Delta_k} g^T_k s +\dfrac{1}{2} s^T B_ks,
\end{equation}
where $\Delta_k$ is the trust-region radius, and $\|\cdot\|$ is any vector norm.
Notice that the difference between the quadratic function in (\ref{subproblem})
  and the model function (\ref{model}) is a constant, which does not affect
  the solution of the subproblem.
Having
obtained a search direction $s_k$, a basic trust-region algorithm computes
a new iterate $x_{k+1}=x_k+s_k$, provided $s_k$ satisfies a sufficient decrease criteria.
(For more details, see, e.g., \cite{nocedal2006numerical,conn2000trust}.)

One of the
advantages of using the Euclidean norm to define the trust region
is that the subproblem is guaranteed
to have an optimal solution. 
The following
theorem~\cite{Gay81,more1983computing} classifies a global
solution to the trust-region subproblem defined using the Euclidean norm:
\begin{theorem}\label{thrm-gay}
Let $\Delta$ be a positive constant.  A vector $s^*$ is a global
solution of the trust-region subproblem (\ref{subproblem}) where
$\|\cdot \|$ is the Euclidean norm if and only if $\|s^*\|\le \Delta$
and there exists a unique $\sigma^*\ge 0$ such that $B+\sigma^* I$ is
positive semidefinite and
\begin{equation}\label{eqn-opt}
  (B+\sigma^*I)s^*=-g \quad \text{and} \quad
\sigma^*(\Delta-\|s^*\|_2)=0.\end{equation}
Moreover, if $B+\sigma^*I$ is positive definite, then the global
minimizer is unique.
 \end{theorem}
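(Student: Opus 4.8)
The plan is to establish the equivalence in both directions and then treat the uniqueness of $\sigma^*$ and the closing strict-convexity remark. Throughout I abbreviate the subproblem objective as $Q(p) = g^T p + \frac{1}{2} p^T B p$. For the sufficiency direction ($\Leftarrow$), suppose $p^*$ satisfies $\|p^*\| \le \Delta$ together with the conditions in (\ref{eqn-opt}) for some $\sigma^* \ge 0$ with $B + \sigma^* I$ positive semidefinite. The key device is the auxiliary function $\psi(p) = g^T p + \frac{1}{2} p^T (B + \sigma^* I) p$, which is convex because $B + \sigma^* I \succeq 0$; its gradient $g + (B + \sigma^* I)p$ vanishes at $p^*$ by the first equation of (\ref{eqn-opt}), so $p^*$ is a global minimizer of $\psi$. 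Rearranging $\psi(p) \ge \psi(p^*)$ gives $Q(p) \ge Q(p^*) + \frac{\sigma^*}{2}(\|p^*\|^2 - \|p\|^2)$ for all $p$. I would then split on the complementarity condition: if $\sigma^* = 0$ the inequality reads $Q(p) \ge Q(p^*)$ directly, while if $\|p^*\| = \Delta$ then $\|p^*\|^2 - \|p\|^2 = \Delta^2 - \|p\|^2 \ge 0$ for every feasible $p$, so again $Q(p) \ge Q(p^*)$. In both cases $p^*$ is a global solution.

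For the necessity direction ($\Rightarrow$), suppose $p^*$ is a global solution. The first-order (KKT) conditions for minimizing $Q$ over the ball $\|p\|^2 \le \Delta^2$ supply a multiplier $\sigma^* \ge 0$ with $(B + \sigma^* I)p^* = -g$ and $\sigma^*(\Delta - \|p^*\|) = 0$, which are precisely the equations in (\ref{eqn-opt}). The substantive step, and the main obstacle, is to upgrade the second-order necessary condition (positive semidefiniteness only on the tangent space of the active constraint) to positive semidefiniteness of $B + \sigma^* I$ on all of $\Re^n$. I would obtain this from global optimality restricted to the boundary sphere: substituting $g = -(B + \sigma^* I)p^*$ into $Q(p) - Q(p^*)$ and using $\|p\| = \|p^*\|$ to eliminate the cross term yields the identity $Q(p) - Q(p^*) = \frac{1}{2}(p - p^*)^T (B + \sigma^* I)(p - p^*)$. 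When the constraint is active, $\|p^*\| = \Delta$, every such $p$ is feasible, so global optimality forces $(p-p^*)^T(B+\sigma^*I)(p-p^*) \ge 0$; since any direction $u$ with $u^T p^* \neq 0$ is realized (up to scaling) as a difference $p - p^*$ with $p$ on the sphere — solve $\|p^* + tu\| = \|p^*\|$ for the nonzero root $t = -2(p^*)^T u$ — semidefiniteness holds along all such $u$ and extends to $u \perp p^*$ by continuity. When the constraint is inactive, $\|p^*\| < \Delta$, complementarity gives $\sigma^* = 0$ and $p^*$ is an interior unconstrained minimizer, whence $B = B + \sigma^* I \succeq 0$ outright.

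For uniqueness of $\sigma^*$, I note that the first equation of (\ref{eqn-opt}) reads $\sigma^* p^* = -(g + B p^*)$, which determines $\sigma^*$ outright when $p^* \neq 0$, while if $p^* = 0$ then $g = 0$ and complementarity forces $\sigma^* = 0$; to preclude two distinct admissible multipliers $\sigma_1 < \sigma_2$ one uses that $B + \sigma_2 I = (B + \sigma_1 I) + (\sigma_2 - \sigma_1)I$ is then positive definite, so $\sigma \mapsto \|(B + \sigma I)^{-1} g\|$ is strictly monotone on the relevant interval and cannot take the value $\Delta$ at both. Finally, the closing remark follows from the same circle of ideas: if $B + \sigma^* I$ is positive definite then $\psi$ above is \emph{strictly} convex, so $\psi(p) > \psi(p^*)$ for every $p \neq p^*$, and running the feasibility-plus-complementarity argument of the sufficiency direction with this strict inequality gives $Q(p) > Q(p^*)$ for all feasible $p \neq p^*$, so the global minimizer is unique.
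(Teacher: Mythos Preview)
The paper does not actually prove this theorem: it is stated as a known result with citations to Gay~\cite{Gay81} and Mor\'e--Sorensen~\cite{more1983computing}, and no proof appears in the text. Your proposal, by contrast, supplies the standard textbook argument (essentially the one in Nocedal--Wright or Conn--Gould--Toint), and it is correct in all essential respects. The sufficiency direction via the convex auxiliary $\psi$, the boundary identity $Q(p)-Q(p^*)=\tfrac{1}{2}(p-p^*)^T(B+\sigma^* I)(p-p^*)$ for $\|p\|=\|p^*\|$, and the direction-realization step on the sphere are exactly how the cited references proceed; your uniqueness paragraph is slightly over-engineered (for a fixed $p^*$, subtracting the two stationarity equations gives $(\sigma_1-\sigma_2)p^*=0$ directly, and the monotonicity of $\sigma\mapsto\|(B+\sigma I)^{-1}g\|$ is not needed here), and the nonzero root on the sphere should read $t=-2(p^*)^Tu/\|u\|^2$ rather than $-2(p^*)^Tu$, but these are cosmetic. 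In short: your argument is sound and more detailed than anything the paper offers, since the paper simply defers to the literature.
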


 It should noted that these optimality conditions also serve as the
 basis for some algorithms to find the solution to subproblems defined
 by the Euclidean norm.  Specifically, these  algorithms find an optimal
 solution
 by searching for a point that satisfies the conditions 
given in (\ref{eqn-opt}) (see,
e.g.,~\cite{more1983computing,MSS1,burdakov2017efficiently,brust2017solving}.)

The {\small MSS} method proposed in this paper uses a trust-region
method defined by the Euclidean norm.  The method solves each trust-region
subproblem using the ideas in~\cite{brust2017solving} for solving
indefinite subproblems.  The following section overviews these ideas.

\subsection{The  Orthonormal basis L-SR1  (OBS) method.}\label{sec-obs}
The {\small OBS} method~\cite{brust2017solving} is a trust-region method
for solving limited-memory symmetric rank-one ({\small L-SR1}) trust-region
subproblems.  The method is able to solve  subproblems to high  accuracy
by exploiting the  optimality conditions given in Theorem~\ref{thrm-gay}.
{\small L-SR1} matrices are generated by a recursion relation using rank-1
updates that  satisfy the secant condition $B_{k+1}s_k=y_k$  for   all
$k$.  Importantly, as with {\small MSS} matrices, {\small L-SR1} matrices
can be indefinite.  (For details on {\small L-SR1}  matrices and methods
see, e.g.,~\cite{nocedal2006numerical}.)

The {\small OBS} method makes use of the partial spectral decomposition
that can be computed efficiently from the compact
formulation~\cite{burdakov2017efficiently,brust2017solving,simaxErway}.
In particular, suppose the compact formulation of an {\small L-SR1}
matrix $B_{k+1}$ is given by $B_{k+1}=B_0+\Psi M\Psi^T$, with $B_0=\gamma I$.
Further, suppose $\Psi=QR$ is the ``thin'' QR factorization of $\Psi$
where $Q\in\Re^{n\times (k+1)}$ and $R\in\Re^{(k+1)\times (k+1)}$ is invertible.
Then,
$$B_{k+1}=\gamma I + QRMR^TQ^T,$$
where $RMR^T$ is a  small $(k+1)\times (k+1)$ matrix (i.e., the number
of stored quasi-Newton  pairs is $k+1$.)   Given the spectral decomposition
$U\hat{\Lambda} U^T$ of $RMR^T$,  the spectral decomposition of $B_{k+1}$
can be written as $B_{k+1}=P\Lambda P^T$, where
$$\Lambda \defined \begin{bmatrix}\hat{\Lambda}+\gamma I & 0 \\ 0 & \gamma I\end{bmatrix} \quad \text{and} \quad P\defined\begin{bmatrix} QU & (QU)^\perp \end{bmatrix}.$$
  For more details on this  derivation for the {\small L-SR1} case, see~\cite{brust2017solving}.

  The  {\small OBS} method uses the above derivation to determine the eigenvalues of  $B_k$ at  each iteration.  Then, depending on the definiteness of $B_k$, the method computes a  high-accuracy solution using the optimality conditions.
  In some cases, a global solution can be computed directly by formula;
  in other cases, Newton's method  must be used to compute $\sigma^*$
  and then $p^*$ is computed directly by solving $(B+\sigma I)p^*=-g$.
  It is worth noting that in the latter case, Newton's method can be initialized
  to guarantee that Newton converges monotonically without needing any
  safeguarding.  Moreover, a high-accuracy solution can also be computed
  in the  so-called \emph{hard  case}~\cite{brust2017solving}.

  The {\small OBS} method can be generalized to solve trust-region subproblems
  where the approximation to the Hessian admits any compact
  representation.  The  addition of a second parameter to the compact
  formulation (e.g., the proposed dense initialization) requires significant modifications to the code.
  However, the resulting code makes use of the same strategy as the
  {\small OBS} method.   The partial spectral decomposition of a MSS
  matrix is carefully presented in the following section.

\section{The Dense Initialization}
In this section, we propose a {\small MSS} method with a  dense
initialization.  In order to introduce the dense  initialization,
we begin by demonstrating how to compute a partial spectral decomposition
of an {\small MSS} matrix in order to lay the groundwork for the dense
initialization.

\subsection{The spectral decomposition}\label{sec-eig}
Recall the compact formulation of a {\small MSS} matrix:
\begin{equation}\label{eq-cptMSS}
B_k=B_0+\Psi_kM_k\Psi_k^T,
\end{equation}
where 
\begin{equation}\label{eqn-cptB2}
  \Psi_k\defined \begin{bmatrix} S_k & (Y_k-B_0S_k)\end{bmatrix} \quad \text{and}
  \quad
  M_k\defined \begin{bmatrix} W(S_k^TB_0S_k-(T_k+E_k+T_k^T))W & W \\ W & 0\end{bmatrix},
\end{equation}
$W \defined (S_k^TS_k)^{-1}$, and $L_k$, $E_k$, and $T_k$ are defined as in (\ref{eqn-SYdecompose}).
Note that this factorization does not require $B_0$ to be a scalar multiple of the identity matrix.
We assume that $\Psi_k\in \Re^{n\times 2l}$  and $M_k\in\Re^{2l\times 2l}$ where
$l\le m$, and $m$ is the maximum number of stored quasi-Newton pairs.

For this derivation of the partial spectral decomposition of $B_k$, we 
assume $B_0=\gamma_k I$ for simplicity.
If the ``thin'' {\small QR} factorization of $\Psi_k$ is $\Psi_k=QR$, where $Q\in\Re^{n\times 2l}$
and $R\in\Re^{2l\times 2l}$ then
$$B_k=B_0+QRM_kR^TQ^T.$$
The matrix $RM_kR^T\in \Re^{2l\times 2l}$ is small, and thus, its spectral decomposition can
be computed in practice.  Let $U\hat{\Lambda}U^T$ be the spectral
decomposition of $RM_kR^T\in \Re^{2l\times 2l}$.  This gives us
\begin{equation}\label{eqn-QU}
B_k  =  B_0+QU\hat{\Lambda}U^TQ^T,
\end{equation}
  and so,
  $B_k=P\Lambda P^T$
  where 
  \begin{equation}\label{eqn-spectral2}
  \Lambda \defined \begin{bmatrix} \hat{\Lambda}+\gamma_k I & 0
    \\ 0 & \gamma_k I\end{bmatrix} \quad \text{and} \quad P\defined
  \begin{bmatrix} QU & (QU)^\perp\end{bmatrix}.
\end{equation}
Here, $\Lambda$ is a diagonal matrix whose first $2l$ entries are
$\hat{\lambda}_i+\gamma_k$, where $\hat{\Lambda}=\text{diag}(\hat{\lambda}_1,
  \ldots, \hat{\lambda}_{2l})$, and the (2,2)-block of $\Lambda$ contains
  the eigenvalue $\gamma_k$ with multiplicity  $n-2l$.  The first $2l$ columns of $P$
  are made up of $QU$, which is orthogonal since it is the product of two orthogonal
  matrices.  In contrast, $(QU)^\perp\in\Re^{n\times (n-2l)}$ is very computationally
  expensive to compute and will never be explicitly used.
  Finally, provided $\Psi_k$ has full column rank, it is worth noting
  that it is possible to form $QU$ without storing $Q$; namely,
  $QU$ can be computed using the formula $QU=\Psi_kR^{-1}U$.
  For simplicity, we define $P_\parallel = QU$ so that $P=\begin{bmatrix} P_\parallel &
    P_\perp\end{bmatrix}$, where $P_\perp=P_\parallel^\perp$.
  Notice that this the same form as the spectral decomposition in (\ref{eqn-spectral2})
  but with different-sized matrices.

  \subsection{The dense initial matrix}\label{subsec-dense}
  In the previous subsection, we assumed that $B_0=\gamma_k I$, i.e., a
  scalar multiple of the identity matrix.  In this subsection, we show
  how the spectral decomposition of $B_k$ can be used to design a
  two-parameter dense initialization.

  \medskip

  Consider the spectral decomposition $B_k=P\Lambda P^T$ where
  $\Lambda$ and $P$ are defined in (\ref{eqn-spectral2}).  In the case
  that $B_0=\gamma_k I$, then $B_0=\gamma_k PP^T$ since $P$ is
  orthogonal.   Moreover, $B_0$ can be written as 
  $$B_0 = \gamma_k P_\parallel P_\parallel^T + \gamma_k P_\perp P_\perp^T.$$
  The dense initialization is formed by assigning a different parameter
  to each subspace, i.e.,
  \begin{equation}\label{eqn-B0}
    \tilde{B}_0 = \zeta_k  P_\parallel P_\parallel^T + \zeta^C_k P_\perp P_\perp^T,
  \end{equation}
  where $\zeta_k$ and $\zeta^C_k$ can be updated each iteration.

  \begin{theorem}\label{theorem1}
Given $\tilde{B}_0$ as in (\ref{eqn-B0}), then $B_k$ in
(\ref{eq-cptMSS}) 
has the following spectral decomposition:
\begin{equation}\label{eqn-spectral3}
  B_k=\begin{bmatrix} P_\parallel & P_\perp \end{bmatrix}
  \begin{bmatrix} \hat{\Lambda}+\zeta_k I & 0 \\ 0 &
    \zeta_k^C I\end{bmatrix}
  \begin{bmatrix} P_\parallel^T \\ P_\perp^T \end{bmatrix}.
\end{equation}
\end{theorem}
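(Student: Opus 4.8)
The plan is to show that the dense initialization $\tilde{B}_0$ acts on the stored displacement vectors $S_k$ in exactly the same way that the scalar multiple $\zeta_k I$ does. This forces the compact ingredients $\Psi_k$ and $M_k$ to agree with those of the scalar-initialized case, so that the spectral derivation of Section~\ref{sec-eig} applies verbatim, and the only change in the eigenvalues is the replacement of the complementary value $\gamma$ by $\zeta_k^C$.

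First I would establish the key identity $\tilde{B}_0 S_k = \zeta_k S_k$. Since $S_k$ is the first block of $\Psi_k$ and $\Psi_k = QR$ with $R$ invertible, the columns of $S_k$ lie in $\text{range}(Q) = \text{range}(P_\parallel)$; hence $P_\perp^T S_k = 0$ and $P_\parallel P_\parallel^T S_k = S_k$. Applying $\tilde{B}_0 = \zeta_k P_\parallel P_\parallel^T + \zeta_k^C P_\perp P_\perp^T$ to $S_k$ then yields $\tilde{B}_0 S_k = \zeta_k S_k$, so the dense matrix is indistinguishable from $\zeta_k I$ when it acts on $S_k$.

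Next I would observe that $B_0$ enters the compact formulation (\ref{eqn-cptB2}) only through the quantities $B_0 S_k$ (inside $\Psi_k$) and $S_k^T B_0 S_k$ (inside $M_k$). By the identity just established, $\tilde{B}_0 S_k = \zeta_k S_k$ and $S_k^T \tilde{B}_0 S_k = \zeta_k S_k^T S_k$, so $\Psi_k$ and $M_k$ built from $\tilde{B}_0$ coincide exactly with those built from $\zeta_k I$. Consequently the thin factor $Q$, the small matrix $R M_k R^T$, its spectral decomposition $U\hat{\Lambda}U^T$, and hence $P_\parallel = QU$, $P_\perp$, and $\hat{\Lambda}$ are precisely the objects produced in Section~\ref{sec-eig}. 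In particular, independent of the initialization, $\Psi_k M_k \Psi_k^T = Q(R M_k R^T)Q^T = QU\hat{\Lambda}U^TQ^T = P_\parallel \hat{\Lambda} P_\parallel^T$.

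Finally I would assemble the result by writing $B_k = \tilde{B}_0 + \Psi_k M_k \Psi_k^T$ and substituting the projector form of $\tilde{B}_0$ together with the expression above:
$$B_k = \zeta_k P_\parallel P_\parallel^T + \zeta_k^C P_\perp P_\perp^T + P_\parallel \hat{\Lambda} P_\parallel^T = P_\parallel(\hat{\Lambda} + \zeta_k I)P_\parallel^T + \zeta_k^C P_\perp P_\perp^T,$$
which is exactly the block-diagonal form (\ref{eqn-spectral3}). The main obstacle is conceptual rather than computational: one must confirm that defining $P_\parallel$ through $\Psi_k$ — which itself contains $B_0$ — introduces no circularity. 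This is resolved precisely by the middle step, which shows $\Psi_k$ and $M_k$ are the same whether built from $\tilde{B}_0$ or from $\zeta_k I$, so $P_\parallel$, $P_\perp$, and $\hat{\Lambda}$ are unambiguously defined. Once that is in place, the remaining manipulations are routine projector algebra.
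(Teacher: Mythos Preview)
Your final assembly is exactly the paper's four-line proof: write $B_k=\tilde{B}_0+\Psi_kM_k\Psi_k^T$, replace $\Psi_kM_k\Psi_k^T$ by $P_\parallel\hat{\Lambda}P_\parallel^T$ via (\ref{eqn-QU}), substitute (\ref{eqn-B0}), and collect terms. Where you go further is in your first two steps, which address the self-reference you correctly flag: $\tilde{B}_0$ is defined through $P_\parallel$, which comes from a $\Psi_k$ that itself contains $B_0$. Showing $\tilde{B}_0S_k=\zeta_kS_k$ (the range fact you use is precisely the paper's later Lemma~\ref{lemma-S}) and hence that $\Psi_k$ and $M_k$ built from $\tilde{B}_0$ agree with those built from $\zeta_kI$ makes $P_\parallel$, $P_\perp$, and $\hat{\Lambda}$ unambiguous. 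The paper's proof simply cites (\ref{eqn-QU}) without this discussion, so yours is the same approach with the well-definedness gap explicitly filled in.
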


\begin{proof}
By (\ref{eqn-QU}) and (\ref{eqn-B0}), we have
\begin{align*}
B_k&=\tilde{B}_0+\Psi_kM_k\Psi_k^T\\
&=\tilde{B_0}+P_\parallel \hat{\Lambda} P_\parallel^T\\
&= \zeta_k  P_\parallel P_\parallel^T + \zeta^C_k P_\perp P_\perp^T+ P_\parallel \hat{\Lambda} P_\parallel^T\\
   &=  \begin{bmatrix} P_\parallel & P_\perp \end{bmatrix}
  \begin{bmatrix} \hat{\Lambda}+\zeta_k I & 0 \\ 0 &
    \zeta_k^C I\end{bmatrix}
  \begin{bmatrix} P_\parallel^T \\ P_\perp^T \end{bmatrix}.
\end{align*}
\end{proof}

 \begin{corollary}\label{corollary3}
   The eigenspace associated with the eigenvalues of $B_k$ is not
   changed
   using the dense initialization (\ref{eqn-B0}) in lieu of the one-parameter
   initialization.
 \end{corollary}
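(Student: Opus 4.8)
The plan is to read the conclusion directly off the two spectral decompositions already in hand, since the corollary is an essentially immediate consequence of Theorem~\ref{theorem1}. Equation (\ref{eqn-spectral2}) records the spectral decomposition of $B_k$ under the one-parameter initialization $B_0=\gamma I$, while Theorem~\ref{theorem1} (equation (\ref{eqn-spectral3})) gives the decomposition under the dense initialization $\tilde{B}_0$ of (\ref{eqn-B0}). The central observation is that both decompositions employ the identical orthogonal eigenvector matrix $P=\begin{bmatrix} P_\parallel & P_\perp \end{bmatrix}$.

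First I would point out that $P$ is assembled from $P_\parallel=QU$, where $\Psi_k=QR$ is the thin QR factorization and $U$ diagonalizes $RM_kR^T$, together with its orthogonal complement $P_\perp=P_\parallel^\perp$. None of these objects is altered by the choice of initialization parameters once $S_k$ and $Y_k$ are fixed; in particular, Theorem~\ref{theorem1} makes explicit that switching from $\gamma I$ to $\tilde{B}_0$ leaves $P_\parallel$, and hence $P_\perp$, untouched. Next I would compare the two diagonal eigenvalue matrices: passing from (\ref{eqn-spectral2}) to (\ref{eqn-spectral3}) replaces $\hat{\Lambda}+\gamma I$ by $\hat{\Lambda}+\zeta_k I$ in the parallel block and $\gamma I$ by $\zeta_k^C I$ in the perpendicular block, while the conjugating matrix $P$ is the same. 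Since the columns of $P$ remain the eigenvectors of $B_k$ in both cases, every invariant subspace spanned by a collection of these columns---in particular the parallel subspace $\mathrm{range}(P_\parallel)$ and its complement $\mathrm{range}(P_\perp)$---is identical under the two initializations, which is exactly the asserted invariance of the eigenspaces.

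I expect the only delicate point to be a bookkeeping issue about eigenvalue multiplicities. If some $\hat{\lambda}_i=0$, then under the one-parameter initialization the corresponding column of $P_\parallel$ shares the eigenvalue $\gamma$ with the whole of $P_\perp$, so the maximal eigenspace for $\gamma$ strictly contains $\mathrm{range}(P_\perp)$; under the dense initialization these components separate once $\zeta_k\neq\zeta_k^C$. To keep the statement clean I would interpret ``the eigenspace associated with the eigenvalues'' at the level of the fixed eigenvector basis $P$ rather than at the level of maximal eigenspaces grouped by numerical value. With that reading the corollary follows immediately from the equality of the two $P$ matrices and requires no further computation.
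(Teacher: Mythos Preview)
Your proposal is correct and mirrors the paper's approach: the paper states Corollary~\ref{corollary3} with no proof at all, treating it as an immediate consequence of Theorem~\ref{theorem1}, and your argument simply makes that immediacy explicit by comparing the common eigenvector matrix $P=[P_\parallel\ P_\perp]$ in (\ref{eqn-spectral2}) and (\ref{eqn-spectral3}). Your added remark about the bookkeeping of eigenvalue multiplicities (when some $\hat{\lambda}_i=0$ or when $\zeta_k=\zeta_k^C$) is a useful clarification that the paper does not address; it correctly identifies that the corollary should be read at the level of the fixed orthonormal basis $P$ rather than maximal eigenspaces grouped by eigenvalue.
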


 \subsection{Parameter choices} \label{parameters}

One possible
motivation to pick $\zeta_k$ is to better ensure that the multiple
secant conditions hold, i.e., $B_kS_k=Y_k$ for all $k$.  Thus,
$\zeta_k$ can be selected to be the solution of the following
minimization problems:

\begin{equation}\label{eqjan28-3}
  \tilde{B}_0=\emph{arg}\min_{B_0}\left \| B_0S_{k}-Y_{k}  \right\|_F^2
  \quad \text{or} \quad   \tilde{B}_0=\emph{arg} \min_{B_0}\left\|
  B_0^{-1}Y_{k}-S_{k} \right\|_F^2,
\end{equation} 
where $\|.\|_{F}$ denotes the Frobenius norm.

\medskip

The following two lemmas and their corollaries are useful for solving both
minimization problems in (\ref{eqjan28-3}).

\begin{lemma}\label{lemma-S}
The following relationships hold between $S_k$, $\Psi_k$ and
$P_\parallel$:  (i) $\text{Range}(S_k)\subseteq \text{Range}(\Psi_k)$, (ii) $\text{Range}(S_k)\subseteq
\text{Range}(P_\parallel)$, and (iii) $P_\parallel P_\parallel^T S_k = S_k$.
\end{lemma}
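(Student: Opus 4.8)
The plan is to prove the three containments in order, each one feeding into the next, using only the elementary fact that the various factorizations relating $S_k$, $\Psi_k$, $Q$, and $P_\parallel$ either preserve or enlarge the relevant column space. Part (i) I would dispatch immediately from the block structure of $\Psi_k$: writing $S_k = \Psi_k \begin{bmatrix} I \\ 0 \end{bmatrix}$, every column of $S_k$ is a linear combination of the columns of $\Psi_k$, so $\text{Range}(S_k)\subseteq\text{Range}(\Psi_k)$.

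For part (ii) the idea is to show $\text{Range}(\Psi_k)=\text{Range}(P_\parallel)$ and then chain this with (i). Under the standing full-column-rank hypothesis on $\Psi_k$ used throughout Section~\ref{sec-eig}, the factor $R$ in the thin {\small QR} factorization $\Psi_k=QR$ is invertible, so $\text{Range}(\Psi_k)=\text{Range}(Q)$. Since $M_k$ is symmetric, the small matrix $RM_kR^T$ is symmetric and admits the spectral decomposition $U\hat{\Lambda}U^T$ with $U$ orthogonal; because $P_\parallel=QU$ and $U$ is square and invertible, $\text{Range}(P_\parallel)=\text{Range}(Q)=\text{Range}(\Psi_k)$. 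Combining with (i) gives $\text{Range}(S_k)\subseteq\text{Range}(P_\parallel)$.

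Part (iii) then follows once I observe that $P_\parallel P_\parallel^T$ is the orthogonal projector onto $\text{Range}(P_\parallel)$. The columns of $P_\parallel$ are orthonormal, since $P_\parallel^T P_\parallel=U^TQ^TQU=U^TU=I$, so $P_\parallel P_\parallel^T$ acts as the identity on every vector in $\text{Range}(P_\parallel)$. By part (ii) each column of $S_k$ lies in $\text{Range}(P_\parallel)$, hence is fixed by the projector, yielding $P_\parallel P_\parallel^T S_k=S_k$.

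I do not expect a genuine obstacle here, as the statement is a chain of standard column-space identities. The only points that require care are confirming that $R$ is invertible, which is exactly the full-column-rank assumption on $\Psi_k$ already invoked above, and that $U$ is orthogonal, which rests on the symmetry of $M_k$ (inherited from the symmetry of $W=(S_k^TS_k)^{-1}$, of $S_k^TB_0S_k$, and of $T_k+E_k+T_k^T$). Keeping track of these two facts is the most delicate part of an otherwise routine argument.
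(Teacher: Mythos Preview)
Your proof is correct and follows essentially the same approach as the paper's own argument: part (i) from the block structure of $\Psi_k$, part (ii) from the identity $P_\parallel=\Psi_k R^{-1}U$ (you go through $Q$, the paper writes it directly, but the content is identical), and part (iii) from the fact that $P_\parallel P_\parallel^T$ is the orthogonal projector onto $\text{Range}(P_\parallel)$. Your additional remarks about why $R$ is invertible and $U$ is orthogonal simply make explicit what the paper leaves implicit.
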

\begin{proof}
  Since $\Psi_k = \begin{bmatrix} S_k & Y_k-B_0S_k\end{bmatrix}$ it
  follows immediately that $\text{Range}(S_k)\subseteq \text{Range}(\Psi_k)$.
  Note that $P_\parallel =\Psi_k  R^{-1}U$, it follows that
  $\text{Range}(\Psi_k)=\text{Range}(P_\parallel)$.
  Finally, $P_\parallel P_\parallel^T S_k=S_k$ since $\text{Range}(S_k)\subseteq
  \text{Range}(P_\parallel)$.
\end{proof}

\begin{corollary}\label{cor-S}
     Suppose $\tilde{B}_0$ is as in (\ref{eqn-B0}).  Then, $\tilde{B}_0S_k=\zeta_k S_k$.
\end{corollary}
\begin{proof}
    By Lemma~\ref{lemma-S},
 $$
    \tilde{B}_0S_k = \left(\zeta_k  P_\parallel P_\parallel^T + \zeta^C_k P_\perp P_\perp^T\right)S_k\\
     =   \zeta_k  P_\parallel P_\parallel^TS_k\\
   =  \zeta_k S_k.
  $$
\end{proof}

\begin{lemma}\label{lemma-Y}
The following relationships hold between $Y_k$, $\Psi_k$ and
$P_\parallel$:  (i) $\text{Range}(Y_k)\subseteq \text{Range}(\Psi_k)$, (ii) $\text{Range}(Y_k)\subseteq
\text{Range}(P_\parallel)$, and (iii) $P_\parallel P_\parallel^T Y_k = Y_k$.
\end{lemma}
\begin{proof}
  To show (i), we prove that $Y_k$ can be written as a linear combination of the
  columns of $\Psi_k$.  Namely, by Corollary \ref{cor-S},
  $$
  Y_k=\zeta_kS_k+\left(Y_k-\tilde{B}_0S_k\right)=\begin{bmatrix}S_k & Y_k-\tilde{B}_0S_k\end{bmatrix}
  \begin{bmatrix} \zeta_kI \\ I\end{bmatrix} = \Psi_k\begin{bmatrix} \zeta_kI\\I
    \end{bmatrix}.
  $$
    Since $P_\parallel =\Psi_k  R^{-1}U$, it follows that
    $\text{Range}(Y_k)\subseteq\text{Range}(P_\parallel)$, giving (ii).
  Finally, $P_\parallel P_\parallel^T Y_k=Y_k$ since $\text{Range}(Y_k)\subseteq
  \text{Range}(P_\parallel)$.
  \end{proof}

\begin{corollary}\label{cor-Y}
     Suppose $\tilde{B}_0$ is as in (\ref{eqn-B0}).  Then, $\tilde{B}_0^{-1}S_k=(1/\zeta_k) S_k$.
\end{corollary}
\begin{proof}
    By Lemma~\ref{lemma-Y},
$$    \tilde{B}_0^{-1}Y_k = \left(\frac{1}{\zeta_k}  P_\parallel P_\parallel^T +
    \frac{1}{\zeta^C_k} P_\perp P_\perp^T\right)Y_k\\
   =   \frac{1}{\zeta_k}  P_\parallel P_\parallel^TY_k\\
   =  \frac{1}{\zeta_k} Y_k.
$$
\end{proof}

\medskip

The optimal values for $\zeta_k$ that solve (\ref{eqjan28-3}) are
given in the following theorem.

\begin{theorem}\label{theoremjan2888-2}
 If $\tilde{B}_0$ is chosen to be in the form of (\ref{eqn-B0}),
  the solutions to
  $$  \hat{B}_0\defined\emph{arg}\min_{
    \tilde{B}_0}\left \| \tilde{B}_0S_{k}-Y_{k}  \right\|_F^2
 \quad \text{and} \quad   \bar{B}_0\defined\emph{arg} \min_{\tilde{B}_0}\left\|
  \tilde{B}_0^{-1}Y_{k}-S_{k} \right\|_F^2
  $$
  are, respectively, $\hat{B}_0=\hat{\zeta}_k P_\parallel P_\parallel^T + \hat{\zeta}_k^C P_\perp
    P_\perp^T$ and
     $\bar{B}_0=\bar{\zeta}_k P_\parallel P_\parallel^T + \bar{\zeta}_k^C P_\perp
    P_\perp^T,$   where
\begin{equation}\label{eqn-oneparam}
  \hat{\zeta}_k=\frac{\text{trace}(S_k^TY_k)}{\text{trace}(S_k^TS_k)}\quad \text{and}\quad
  \bar{\zeta}_k=\frac{\text{trace}(Y_k^TY_k)}{\text{trace}(S_k^TY_k)},
\end{equation}
  and $\hat{\zeta}_k,\bar{\zeta}_k\in\Re$.
\end{theorem}
\begin{proof}
Consider the problem $$  \hat{B}_0\defined\emph{arg}\min_{
  \tilde{B}_0}\left \| \tilde{B}_0S_{k}-Y_{k}  \right\|_F^2,$$
which can be restated as finding $\hat{\zeta}_k$ and $\hat{\zeta}_k^C$
that solve the following optimization problem:
\begin{equation}\label{eqn-zzc}
  \emph{arg}\min_{\hat{\zeta}_k,\hat{\zeta}_k^C	}
\left\| \left(\hat{\zeta}_k P_\parallel P_\parallel ^T+\hat{\zeta}_k^{C} P_\perp P_\perp ^T\right)S_{k}-Y_{k}  \right\|^2_F.
\end{equation}
By Lemma~\ref{lemma-S}, the problem (\ref{eqn-zzc}) is equivalent to
$$
\emph{arg}\min_{\hat{\zeta}_k}
\left\| \hat{\zeta}_k S_k -Y_{k} \right\|^2_F,
$$
which can be written as follows:
\begin{equation}\label{eqn-1D}
\emph{arg}\min_{\hat{\zeta}_k}\left\{
\text{trace}\left( \left(\hat{\zeta}_k S_k-Y_k\right)^T \left(\hat{\zeta}_k S_k-Y_k\right)\right)\right\}.
\end{equation}
The minimization problem (\ref{eqn-1D}) is a one-dimensional optimization problem in
$\hat{\zeta}_k$ that can be solved by differentiating the objective function in (\ref{eqn-1D}) with
respect to $\hat{\zeta}_k$ and setting the result to zero:
$$
  \frac{d}{d\hat{\zeta_k}} \left[ \text{trace}(\hat{\zeta}_k^2S_k^TS_k -
  2\hat{\zeta}_kS_k^TY_k + Y_k^TY_k) \right] =
   2\hat{\zeta}_k\text{trace}(S_k^TS_k)-2\text{trace}(S_k^TY_k ) = 0.
$$
This yields the following
solution:

\begin{equation}
\label{eqn-zetamin}
  \hat{\zeta}_k=\frac{\text{trace}(S_k^TY_k)}{\text{trace}(S_k^TS_k)}.
\end{equation}
Note that $\hat{\zeta}_k$ in (\ref{eqn-zetamin}) is the minimizer
since the function to be minimized is both convex and quadratic
in $\hat{\zeta}_k$.  The solution $\bar{B}_0$ can be obtained using a similar process.
\end{proof}

This strategy for choosing the two parameters does not specify
$\hat{\zeta}_k^C$; in fact, this paramater is irrelevant to solving
the minimization problem.  Also, notice that when $k=1$,
$\bar{\zeta}_k$ is the conventional initialization for the {\small
  BFGS} method and sometimes referred to as the Barzilai and Borwein
({\small BB}) initialization~\cite{bb88}.

 \medskip

 It is also worth noting that applying either choice of $\zeta_k$ from this
 theorem could result in a choice of $\zeta_k$ that is negative.  The
 disadvantage of negative $\zeta_k$ is that the initial $B_0$ will
 have $2l$ negative eigenvalues.  While rank-2 updates could shift
 these eigenvalues into positive territory, the subproblem solver will
 need to be computing solutions on the boundary of the trust region as
 long as any eigenvalue of $B_k$ is negative.  This is undesirable for
 two reasons (i) at a local solution of (\ref{eqn-min}) it is
 desirable that $B_k$ is positive definite and (ii) it is more
 computationally expensive for the subproblem solver to compute
 solutions on the boundary than in the interior of the trust region.

 \subsection{Selecting $\zeta_k^C$}
 While the subspace $P_\parallel$ is constructed from the pairs of
 current limited-memory iterates $\{s_i,y_i\}$, $i=k-l,\ldots, k-1$,
 the subspace $P_\perp$ is unknown.  In~\cite{denseInit}, Brust et
 al. propose suggestions for the second parameter in the proposed
 dense initialization for an {\small {L-BFGS}} method.  These
 suggestions are based on the trust-region subproblem solver that is
 applicable in our proposed method. Indeed, the same comments
 apply to the generalized {\small OBS} method found in
 Section~\ref{sec-obs}.  There is in an inverse relationship between
 $\zeta_k^C$ and the subspace $P_\perp$.  Specifically, if $\zeta_k^C$
 is large then the component of the solution to the trust-reigon
 subproblem that lies in the subspace $P_\perp$ will be small in norm.
 See~\cite{denseInit} for details.  However, choices of $\zeta_k^C$
 that are too large or too small may lead to poorly-conditioned
 approximate Hessians.  Strategies such as
 taking $\zeta_k^C$ to be convex combnations of the current $\zeta_k$
 and previous values of $\zeta_k$ have been found to be beneficial
 in the {\small BFGS} case~\cite{denseInit}.

\section{The Proposed MSS Method}
In this section, we present the proposed {\small MSS} method.  This method
uses a trust-region method whose subproblems are defined
using a {\small MSS} matrix.  The result is a type of limited-memory quasi-Newton trust-region
method whose approximate Hessian can be indefinite at any given iteration.  Finally, 
this section
concludes with a discussion of the computational cost
of  the proposed  method.

\subsection{Full rank assumptions}\label{fullrankS}

\subsection{Ensuring $S_k$ has full rank}\label{fullrankS}

In order for the compact formulation to exist, it must be the case
that $S_k$ has full rank so that $W=(S_k^TS_k)^{-1}$ 
in (\ref{eqn-cptB}) is well-defined.
To ensure this, the $LDL^T$ factorization
of $S_k^TS_k$ is computed to find linearly dependent columns in $S_k$.
Specifically, small diagonal elements in  $D$ 
correspond to columns of $S_k$ that are linearly dependent.  Let
$\hat{S}_k$ denote the matrix $S_k$ after removing the linearly
dependent columns. The resulting matrix $\hat{W}\defined (\hat{S}_k^T\hat{S_k})^{-1}$ is
positive definite, and thus, invertible.  This strategy is not new and
was first proposed in this context in~\cite{burdakov2017efficiently}.  For
notational simplicity, we assume that $S_k$ has linearly independent
columns for the remainder of this section because if it is not full
rank, linear dependence will be removed by this procedure.  For the proposed
{\small MSS} method, a full rank $S_k$ is used to define $\Psi_k$.

\subsection{Ensuring $\Psi_k$ has full rank}\label{fullrankPsi}
It
is also desirable that $\Psi_k$ is full rank to avoid any adverse
affects on the condition number of $B_k$.
If $\Psi_k$ is not
full rank, then $\hat{\Lambda}$ will have at least one diagonal entry
that is close to zero (the number of small diagonal elements of $R$
will correspond to the number of small eigenvalues of $RMR^T$).  Without loss
of generality,
assume that $i$ is such that $\hat{\lambda}_i+\zeta_k\approx \zeta_k$,
i.e., $\hat{\lambda}_i\approx 0$.  In the cases when
$|\hat{\lambda}_i+\zeta_k|\approx |\zeta_k|$ is the largest or smallest
approximate eigenvalue of $B_k$ in absolute value, the condition number of $B_k$ will be
negatively affected, possibly detrimentally.  In contrast, if
$|\hat{\lambda}_j+\zeta_k|\le|\zeta_k|\le|\hat{\lambda}_{\hat{k}}+\zeta_k|$
for some $j,\hat{k}\in \{0,1,\ldots, 2\text{rank}(S)\}$ then the condition number
of $B_k$ will not be impacted.  For this reason, ensuring that $\Psi$
has full rank will help prevent unnecessary ill-conditioning.

In order to enforce that $\Psi_k$ has  full rank, we also  perform the  $LDL^T$
factorization of $\Psi_k^T\Psi_k$ to find linearly dependent
columns of $\Psi_k$, similar to the procedure to ensure that  $S_k$ is full
rank.  
\\

\subsection{$LDL^T$ and its effect on the eigendecomposition}
Using the $LDL^T$ factorization of $\Psi_k^T\Psi_k$ in lieu of the $QR$ factorization of $\Psi_k$ in Section~\ref{sec-eig}
requires redefining the  eigendecomposition of $B_k$.
The following presentation is based on~\cite{Oleg2021}.  For notational simplicity, the subscript $k$ is dropped for all matrices.

Suppose $\Pi^T\Psi^T\Psi\Pi=LDL^T$ is the $LDL^T$ factorization of $\Psi^T\Psi$ and $\Pi$ is  a permutation matrix\footnote{For the numerical
results, MATLAB's built-in
\texttt{ldl} command, which uses rook pivoting\cite{matlab,higham02}, is used.}.  Let $J$ be the  set of  indices
such that its corresponding diagonal entry in $D$ and that of it's
``pair'' is sufficiently large (see
Section \ref{sec-complex}).
Then, $$\Psi^T\Psi\approx
\Pi\L_{\dagger}D_{\dagger}L_{\dagger}^T\Pi^T=\Pi R_{\dagger}^TR_{\dagger}\Pi^T,$$
where
$L_\dagger$ is the matrix $L$ having  removed  columns indexed by an element
not in  $J$, $D_\dagger$ is the matrix $D$ having  removed any rows and columns indexed by an element not in  $J$, 
$R_\dagger \defined \sqrt{D_\dagger}L_\dagger^T\in\Re^{r\times {2l}}$, and
  $r=|J|$.  
  This gives the approximate decomposition of $B_k$:
  \begin{equation}\label{eqn-dagger}
  B_k=B_0+Q_\dagger R_\dagger \Pi^TM\Pi R_\dagger^TQ_\dagger^T,
  \end{equation}
  where $Q_\dagger\defined (\Psi\Pi)_\dagger R_\ddagger^{-1}$,
  $(\Psi\Pi)_\dagger$ is the matrix $\Psi\Pi$ having deleted columns
  not indexed in $J$, and $R_\ddagger$ is $R_{\dagger}$ having removed
  columns not indexed in $J$.  (For more details on this derivation,
  see~\cite{Oleg2021}.)

  The partial eigendecomposition proceeds as in Section~\ref{sec-eig}
  but with $R_\dagger \Pi^T M \Pi R_\dagger^T$ in lieu of $RMR^T$.
  Finally, as in~\cite{Oleg2021}, $P_\parallel$ can be computed as
  \begin{equation}\label{eqn-pparallel}P_\parallel=(\Psi\Pi)_\dagger R_\ddagger^{-1}U.\end{equation}


\subsection{The benefits of two $LDL^T$ factorizations}

While ensuring $\Psi_k$ is full rank is sufficient to identify the columns of $S_k$ that 
are linearly independent, there is a disadvantage to computing only the $LDL^T$
factorization of $\Psi_k^T\Psi_k$ in lieu of the $LDL^T$ factorizations of both of $S_k^TS_k$ and
$\Psi_k^T\Psi_k$.  Namely, using the $LDL^T$ factorization of $\Psi_k^T\Psi_k$
to identify linearly independent columns of $S_k$ has the potential of forcing the loss
of more information.  To see this, consider what occurs
when only one $LDL^T$ factorization is performed.  If the factorization
of $\Psi_k^T\Psi_k$ leads to removing a column associated with $S_k$, (e.g., the column $s_i$), then the corresponding
column of $Y_k$, (i.e., $y_i$), must also be removed in order that $S^T_kY_k$ is computable and the dimensions of $M$ are well-defined.  
Similarly, if the column of $y_i-B_0s_i$ of $\Psi_k$
needs to be removed, then to maintain dimension agreement in the compact formulation,
both $s_i$ and $y_i$ from $S_k$ and $Y_k$, respectively, must be removed to compute $M$.  In other words, columns of $\Psi_k$
must be removed in pairs.  This can lead to deleterious effects when, for example, $\Psi_k$
for a matrix of size $n\times 2l$ and
$\hat{l}$ columns are found to be linearly independent corresponding to different subscripts.  In this case, since $l$ is typically chosen to be small, potentially few columns of $\Psi_k$ may
remain; moreover, in the extreme case when $\hat{l}=l$ then
no columns of $\Psi_k$ will be preserved since columns must be removed in pairs unless additional measures are taken. 

In contrast, performing two $LDL^T$ factorizations helps mitigate information loss.  Namely, if $s_i$ is called
upon to be removed due to the $LDL^T$ factorization of $S_k^TS_k$, it is necessary to remove $y_i$ so that  $S_k^TY_k$ can be computed. The remaining columns of $S_k$ and $Y_k$ are used to form $\Psi_k$ (and $M$).  However, if there is further dependency in $\Psi_k$, after having removed dependent columns in $S_k$, then (1) only one column needs to be removed at a time (i.e., if the column $y_i-B_0s_i$ needs to be removed, the column $s_i$ need not be removed from $\Psi_k$)
and (2)
$M$ does not need to be further altered.  To see this, consider equation~(\ref{eqn-dagger}), where columns of $\Psi_k$ are removed, $R_\dagger$ and $R_\ddagger$ are formed, and $M$ remains intact.  In other words, there is no additional information loss in $M$ when columns in $\Psi_k$ are found to be linearly dependent after
having already removed linearly dependent columns in $S_k$ (the first block of $\Psi_k$).  Thus, performing two $LDL^T$ decompositions removes the requirement that (1) columns from $\Psi_k$ are always removed in pairs and (2) mitigates additional information loss in $M$.

\subsection{MSS algorithm with a dense initial matrix}
The following algorithm is the proposed {\small MSS} algorithm.
Details follow the algorithm.  Brackets are used to denote quantities that
are stored using more than one variable.  (For example, $[Bp]=-g$
means to store $-g$ in the variable name $Bp$.)  In the below
algorithm, $m$ denotes the maximum number of stored quasi-Newton pairs
and $l$ denotes the current number of stored pairs.  The {\small MSS}
algorithm requires an initial $x$ as well as a way to compute the
function $f$ and its gradient $g$ at any point.


\begin{procedureAlg}
  \begin{algorithmic}[1]
   \caption{ MSS Method with dense initialization}\label{alg-mss}
     \STATE \textbf{Input:} $x$ 
     \STATE Pick $0\ll \tau<\hat{\tau}$;
     \STATE Set $m\in [3,7]$ with $m\in\mathbb{Z}^+$, $\Delta=1$;
        \STATE Set $f\gets f(x)$ and $g\gets g(x)$;  $l\gets 0$;
        \STATE $p\gets -g$; store $[Bp]=-g$;
        	\STATE Use an Amijo backtracking line search to find $\alpha$;  $x\gets x+\alpha p$; 
   \STATE Update $f\gets f(x)$ and $g\gets g(x)$;
   \STATE Update $s$ and  $y$; $l\gets 1$;
   \STATE Update $\Delta$ using Algorithm (\ref{alg-delta});
     \WHILE{not converged} 
      \IF{$l\le m$,}
      \STATE {Add $s$ to $S$ and add $y$ to $Y$;  $l\gets l+1$;}
      \ELSE \STATE{Remove oldest pair in $S$, $Y$, and corresponding column in $\Psi$;}
      \STATE {Add $s$ to $S$ and add $y$ to $Y$;}
      \ENDIF
      \STATE {Compute $\zeta$ and $\zeta^C$ (e.g., Section \ref{parameters}); Update $\Psi$;}
    \STATE {Ensure $S$ and $\Psi$ are full rank using Section \ref{fullrankS}; Store $R_\dagger$, $R_\ddagger$;}
    \STATE{Update $M$;}
    \STATE {Compute the spectral decomposition $U\hat{\Lambda}U^T$ of $R_{\dagger}\Pi^T M \Pi R_{\dagger}^T$;}
    \STATE {$\Lambda\gets \hat{\Lambda}+\zeta I$;}
   \STATE {{\small$[p,Bp]$=$\text{obs}^*$($g,\Psi,R_\ddagger^{-1},U,\Lambda,\zeta,\zeta^C,\Delta$)};}
   \STATE {$x\gets x+p$;}
  \STATE  {$f\gets f(x)$ and  $g\gets g(x);$}
\STATE  {Update $s$ and $y$;}
\STATE  {Update $\Delta$ using Algorithm (\ref{alg-delta});}
\ENDWHILE
      \STATE  \textbf{Output:} $x$ 
\end{algorithmic}
\end{procedureAlg}

Algorithm \ref{alg-mss} begins by letting $B_0=I$, and thus, the
search direction is the steepest descent direction.  (Future iterations
use the dense initialization.)  The update to  $\Delta$ at each
iteration makes use of Algorithm \ref{alg-delta}, which is a standard
way to update the trust-region radius (see, for example,~\cite{nocedal2006numerical}).
In Algorithm~\ref{alg-delta},  the variable $\rho$ stores the ratio between the actual change
and the predicted change.  If this ratio is sufficiently large,
then the step is accepted and the trust-region radius is possibly updated.
If the ratio is too small, the step is rejected and $\Delta$ is decreased.
Note that the implicit update to $B_k$, i.e., the acceptance of
a new quasi-Newton pair (lines 8 and 25 in  Algorithm~\ref{alg-mss}),
is independent of whether the step is accepted in Algorithm~\ref{alg-delta}.

The $LDL^T$ decomposition is used twice in line 18 in Algorithm~\ref{alg-mss}.  Each time,
the $i$th diagonal entry of $D$ is considered to be sufficiently large provided
$$D_{ii}>\text{tol}*\max_{j}|D_{jj}|,$$
where $D_{jj}$ denotes the $j$th diagonal entry in $D$ and tol is
a small positive number (see Sections \ref{fullrankS}-\ref{fullrankPsi}
for more details).
Finally, in line  (22), a modified \texttt{obs} method (denoted
by the asterisk) is used to solve the trust-region subproblem.

\begin{procedureAlg}
  \begin{algorithmic}[1]
\STATE \textbf{Fix:} $0<\eta_1<\eta_2<1$, $0<\gamma_1, \gamma_p<1<\gamma_2$;
    \STATE $\rho\gets (f_{\text{new}}-f)/(-g^Tp-0.5p^T[Bp],0)$;
    \IF{$\rho\ge\eta_1$,} 
     \STATE {$f\gets f_{\text{new}}$; $x\gets x+p$ and $g\gets \nabla f(x)$;}
     \IF {$\rho\ge\eta_2$,}
     \IF {$\|p\|> \gamma_p \Delta$,}
     \STATE{$\Delta\gets \gamma_2\Delta$;}
     \ENDIF
             \ENDIF
      \ELSE
       \STATE{ $\Delta\gets \gamma_1\Delta$;}
      \ENDIF
      \caption{ Update the trust-region radius}\label{alg-delta}
  \end{algorithmic}
  \end{procedureAlg}

\begin{theorem}\label{theorem-convergence}
  Let  $f:\Re^n\rightarrow \Re$ be a twice-continuously differentiable and
  bounded below. Suppose there exists scalars $c_1,c_2>0$ such that 
 \begin{equation}\label{Jan22-2}
 \|\nabla^2 f(x)\| \leq c_1 \quad \text{and} \quad \|B_k\| \le  c_2, 
 \end{equation}
 for all $x \in \mathbb{R}^n$ and for all $k$. Consider the sequence $\{x_k\}$ generated
 by Algorithm \ref{alg-mss} with  $\zeta, \zeta^C\in\Re$.  
Then,
$$
\lim \limits_{k \to \infty}\|\nabla f(x_k)\|=0.
$$
\end{theorem}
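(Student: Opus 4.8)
The plan is to place Algorithm~\ref{alg-mss} inside the classical trust-region global convergence framework (see, e.g., \cite{nocedal2006numerical,conn2000trust}) and verify the two hypotheses that framework requires: a \emph{fraction-of-Cauchy} model decrease and a \emph{model-accuracy} estimate. The assumptions of the theorem supply exactly what these two conditions need: $f$ is bounded below, the bound $\|\nabla^2 f(x)\|\le c_1$ makes $\nabla f$ Lipschitz continuous with constant $c_1$, and $\|B_k\|\le c_2$ controls the model curvature uniformly. Writing $g_k=\nabla f(x_k)$, $\text{pred}_k=-g_k^Tp_k-\tfrac12 p_k^TB_kp_k$, and $\text{ared}_k=f(x_k)-f(x_k+p_k)$, I would first record the accuracy estimate. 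By Taylor's theorem there is a point $\xi_k$ on the segment $[x_k,x_k+p_k]$ with $\text{ared}_k-\text{pred}_k=\tfrac12 p_k^T(B_k-\nabla^2 f(\xi_k))p_k$, so that $|\text{ared}_k-\text{pred}_k|\le\tfrac12(c_1+c_2)\|p_k\|^2\le\tfrac12(c_1+c_2)\Delta_k^2$; this holds for \emph{any} admissible step, independently of how $p_k$ is produced.

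The main work is the fraction-of-Cauchy estimate, and here the two branches in lines~(21)--(23) must be treated separately. On the $\text{obs}^*$ branch the returned step solves (\ref{subproblem}) to high accuracy, and since the true Cauchy point of the model $Q$ is feasible, by Theorem~\ref{thrm-gay} the predicted reduction is at least the reduction at that Cauchy point, giving the standard bound $\text{pred}_k\ge\tfrac12\|g_k\|\min\!\big(\Delta_k,\|g_k\|/c_2\big)$. The delicate case is the fallback, where $p_k=-\alpha_k g_k$ with $\alpha_k=\min(1,\Delta_k/\|g_k\|)$ is the Cauchy point for $B=I$ rather than for $B_k$. Here I would compute directly that $\text{pred}_k=\alpha_k\|g_k\|^2-\tfrac12\alpha_k^2\,g_k^TB_kg_k\ge\alpha_k\|g_k\|^2\big(1-\tfrac12\alpha_k c_2\big)$, using $g_k^TB_kg_k\le c_2\|g_k\|^2$, and then observe that once $\Delta_k\le\|g_k\|/c_2$ one has $\alpha_k=\Delta_k/\|g_k\|$ and $\tfrac12\alpha_k c_2\le\tfrac12$, so that $\text{pred}_k\ge\tfrac12\Delta_k\|g_k\|=\tfrac12\|g_k\|\min(\Delta_k,\|g_k\|/c_2)$. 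Thus the fraction-of-Cauchy bound holds for the fallback step precisely in the small-radius regime, which is the only regime the convergence argument actually uses.

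With both conditions in hand, the remainder is the standard argument. Combining the accuracy estimate with the Cauchy bound shows that whenever $\Delta_k$ falls below a threshold proportional to $\|g_k\|$ one has $\rho_k\ge\eta_2$, so the step is accepted and the radius is not contracted; since a rejection only scales $\Delta_k$ by $\gamma_1<1$, this pins $\Delta_k$ below by a fixed multiple of $\|g_k\|$ on any set of iterations where $\|g_k\|$ is bounded away from zero. Because $\eta_1>0$ and $\{f(x_k)\}$ is nonincreasing and bounded below, each accepted step contributes at least $\eta_1\,\text{pred}_k\ge\eta_1 c\,\|g_k\|\min(\Delta_k,\|g_k\|/c_2)$ to a convergent telescoping sum, forcing $\|g_k\|\min(\Delta_k,\|g_k\|/c_2)\to 0$.

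Finally I would run the usual contradiction argument of \cite[Thm.~4.6]{nocedal2006numerical}: assuming a subsequence with $\|g_k\|\ge\epsilon>0$, the lower bound on $\Delta_k$ together with the Lipschitz continuity of $\nabla f$ (available from $\|\nabla^2 f\|\le c_1$) forces a fixed amount of decrease between each such iterate and the next time $\|g\|$ is halved, contradicting summability and thereby upgrading $\liminf\|g_k\|=0$ to the claimed $\lim\|g_k\|=0$. The main obstacle is exactly the fallback step: because it is the Cauchy point of the \emph{wrong} model ($B=I$), its decrease must be re-derived for the true $B_k$, and one must argue that the regime where it fails to be a fraction-of-Cauchy step is harmless because there the step is simply rejected and $\Delta_k$ shrinks into the good regime.
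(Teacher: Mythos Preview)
Your proposal is correct and takes essentially the same approach as the paper: both reduce the claim to a standard trust-region global convergence theorem, with the paper simply invoking Theorem~6.4.6 of \cite{conn2000trust} in a single sentence. Your detailed verification of the fraction-of-Cauchy and model-accuracy hypotheses---especially the explicit treatment of the fallback step $p_k=-\alpha_k g_k$ and the observation that its Cauchy-type decrease for the true model $B_k$ need only hold in the small-radius regime---goes well beyond what the paper actually writes, but the underlying strategy is the same.
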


\begin{proof}
  Since the {\small OBS} method generates high-accuracy solutions to the
  trust-region subproblem, all the assumptions of Theorem 6.4.6 in~\cite{conn2000trust} are met, proving convergence.
\end{proof}
 
\subsection{Computational complexity} \label{sec-complex}
In this section, we consider the computational cost per iteration of
Algorithm (\ref{alg-mss}). 
The $LDL^T$ factorization of $S_k^TS_k$ and $\Psi_k^T\Psi_k$
are each $\mathcal{O}(l^3)$.  Moreover, if the $LDL^T$ factorization is
updated each iteration instead of computed from scratch, this cost
drops to $\mathcal{O}(l^2)$.  The spectral decomposition's dominant
cost is at most $\mathcal{O}(l^3)=\left(l^2/n\right)\mathcal{O}(nl)$, depending on the rank of $\Psi_k$.
Finally, the \texttt{obs} method requires two  matrix-vector products
with $P_\parallel$, whose dominant cost is
 $\mathcal{O}(4nl)$ flops to solve each trust-region subproblem.
(For comparison, this is the same as the dominant cost of {\small L-BFGS}~\cite{Nocedal80}.)


\section{Numerical Experiments}
The {\small MSS} method was developed to solve large-scale unconstrained nonconvex optimization problems.
For this reason, the experiments in this section test the performance of the {\small MSS} method on a set of large-scale
unconstrained optimization problems from the {\small CUTE}est test set~\cite{cutest}.  Specifically, for these experiments, we considered
all problems from
 {\small CUTE}est test set with $n\ge 1000$ of the classification "OUR2"\footnote{See \url{https://www.cuter.rl.ac.uk/Problems/mastsif.shtml} for further classification information.},
which includes all problems with an objective function that is nonconstant, nonlinear, nonquadratic, and not the sum of squares.
This selection yielded 60 problems.  There were 11 problems on which no method converged and these were removed from the test set resulting in a total of 49 problems on which results are presented.

$$ $$

For these experiments, the trust region was defined using the
Euclidean norm, and Algorithms \ref{alg-mss}-\ref{alg-delta} were run
with the following parameters: $\eta_1=0.01,$ $\eta_2=0.75$,
$\gamma_1=0.5$, $\gamma_2=2$, and $\gamma_p=0.8$.

In our experiments, we found that updating $s$ and $y$ only when
\begin{equation}\label{eqn-sy-positive}s_i^Ty_i>\epsilon\|s_i\|\|y_i\|,
  \end{equation}
where $\epsilon$ is machine precision, slightly outperformed the
strategies of updating $s$ and $y$ regardless of the sign and magnitude of their inner product.
Thus, for all experiments reported in this section, we updated the  pairs $(s,y)$
only when they satisfied (\ref{eqn-sy-positive}).

\medskip

For these experiments, the algorithms terminated successfully if  iterate $x_k$ satisfied
$$\|g(x_k)\|\le\max(\tau_g*\|g_0\|,\tau_g),$$
where $\tau_g=1e^{-5}$ and $g_0=g(x_0)$.  On the other hand, the methods terminated unsuccessfully
if either of the following occurred: the number of iterations reached $2n$, the number of function evaluations
reached $100n$, or $\Delta$ became smaller than $100\epsilon$, where $\epsilon$ denotes machine precision.

The results of experiments are partially summarized using performance profiles, proposed by
Dolan and Mor\'{e}\cite{DolanMore02}.  Specifically, if $\mathcal{P}$ denotes the set of test problems,
the performance profiles plot the function
$\pi_s:[0,r_M]\rightarrow\Re^+$ defined by
$$\pi_s(\tau)=\frac{1}{|\mathcal{P}|}\left|\left\{p\in \mathcal{P}: \text{log}_2(r_{p,s})\le \tau\right\}\right|,$$
  where $r_{p,s}$ denotes the ratio of the number
  of function evaluations needed to solve problem $p$ with method $s$ with the least number of function
  evaluations needed to solve $p$ by any method.  Note that $r_M$ is the maximum value  of  $\text{log}_2(r_{p,s})$.

\medskip
\noindent
    {\bf Experiment 1. } 
    The first experiment
compares the number of function evaluations needed to solve each problem using
five different options for initializations (\ref{eqn-B0}) found in Table~\ref{table1}. Three memory size limits ($m=3,5,$ and $7$) were tested.  

\begin{table}[h!]
\caption{Values for  different $\zeta_k$  and $\zeta_k^C$ used  in Experiment 1.}
\centering
 {\renewcommand{\arraystretch}{2}
    \begin{tabular}{c | c | c}
    & $\zeta_k$ & $\zeta_k^C$\\
    \hline
    Option 1 &  $\frac{y_k^Ty_k}{y_k^Ts_k}$ & $\frac{y_k^Ty_k}{y_k^Ts_k}$\\\hline
Option 2 & $ \frac{\text{trace}(Y_k^TY_k)}{\text{trace}(S_k^TY_k)}$ &  $\frac{\text{trace}(Y_k^TY_k)}{\text{trace}(S_k^TY_k)}$\\ \hline
    Option 3 & $\frac{\text{trace}(S_k^TY_k)}{\text{trace}(S_k^TS_k)}$ &  $\frac{\text{trace}(S_k^TY_k)}{\text{trace}(S_k^TS_k)}$ \\ \hline
    Option 4 &  $\underset{k-1\le i \le k-l}{\text{max}}\left\{\frac{y_i^Ty_i}{y_i^Ts_i}\right\}$ & $\frac{y_k^Ty_k}{y_k^Ts_k}$\\ \hline
 Option 5 &  $\underset{k-1\le i \le k-l}{\text{max}}\left\{\frac{y_i^Ty_i}{y_i^Ts_i}\right\}$ & $\underset{k-1\le i \le k-l}{\text{mean}}\left\{\frac{y_i^Ty_i}{y_i^Ts_i}\right\}$\\ \hline
\end{tabular} } 
\label{table1}   
\end{table}

\medskip

The first initialization option tested is the Barzilai and Borwein ({\small BB}) initialization, which is
a standard choice for many quasi-Newton methods~\cite{bb88}.  This is
a one-parameter initialization that does not use a dense
initialization; however, we found that this is a very
competitive initialization for the {\small MSS} method.  The second and third
options are  one-parameter initialization that pick $\zeta_k$ based on Theorem~\ref{theoremjan2888-2}.  
The fourth option is
a two-parameter initialization that $\zeta_k$ to be the
maximum ratio of $y_i^Ty_i/y_i^Ts_i$ where $i$ ranges over the stored
updates, and $\zeta_k^C$ is chosen as the {\small BB} initialization.  This initialization
is based on ideas in~\cite{Oleg2021}.  Finally, the last option can be viewed as a variant of Option 4.
 In all experiments, safeguarding was
use to ensure that $\zeta_k$ and $\zeta_k^C$ were neither too large nor
too small.  Specifically, for these experiments,
either parameter was set to its previous value
if it fell outside the range $[10^{-4},10^4]$.
In addition to preventing a parameter from becoming too large, this also prevents
either parameter becoming negative leading to indefinite trust-region subproblems
(see the discussion at the end of Section~\ref{subsec-dense}).
Finally, for the initial iteration,  $\zeta_0=\zeta_0^C=1$.
The parameters chosen in Table~\ref{table1} represent some of the best  choices that
we have found.

\medskip
  
  For all initializations, $m=3$ yielded the best results for {\small MSS}.  Morever, 
  Options 1, 4, and 5 were the best choices with $m=3$.  
  Figure~\ref{fig1} displays these results.  From the performance profile, it appears that Option 4 outperforms the other two options, including usual quasi-Newton initialization (Option 1).  
It is important to note that the {\small MSS} method performs the same
number of function as gradient evaluations (i.e., one per iteration),
and thus, the performance profile for gradient evaluations is
identical to the one presented for function evaluations.

\begin{figure*}[h!]	
		\includegraphics[width=\textwidth]{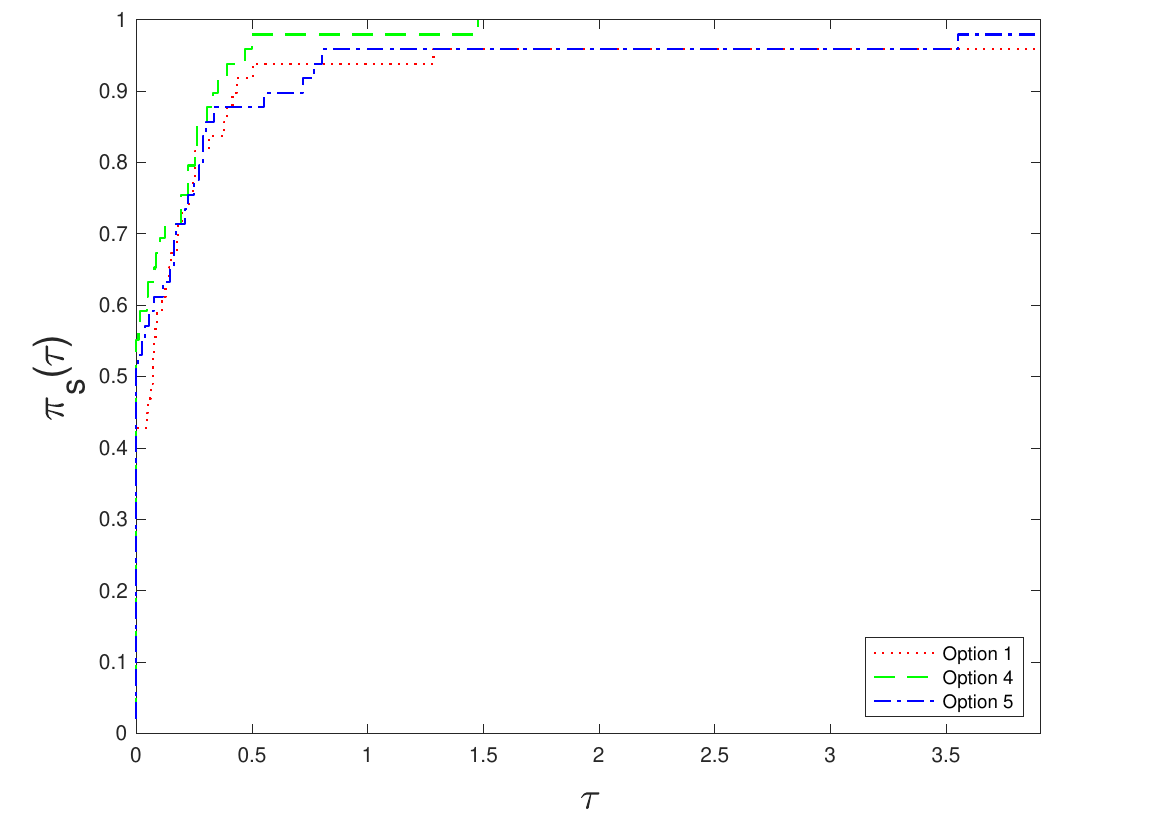}
		\caption{Performance profile comparing function evaluations using MSS with three different initializations and $m=3$.}
		\label{fig1}       
\end{figure*}

  \medskip
  
  Table~\ref{table1} presents the cumulative results for $m=3$ for all five initializations.  In this table, the total number of function
  evaluations reported includes only the 46 problems on which the {\small MSS} method converged
  using all five initializations.  Table~\ref{table1} shows that Option 4 appears to outperform the other
  initializations in number of problems {\small MSS} could solve but also in terms of the total
  number of function evaluations when considering only the problems in which all initializations led to
  convergence for {\small MSS}.

\begin{table}[h!]
  \caption{Cumulative results on 49 CUTEst problems for $m=3$.}
  \label{table1}
  \begin{center}
    \begin{tabular}{ l c c c c c }
       & Option 1 & Option 2 & Option 3 & Option 4 & Option 5\\
       \hline
   Problems solved & 47 & 48 & 48 & 49 & 48\\ 
   Function evaluations (FE)  &  8295 & 7570 &9255&  6809 & 6944 \\
\hline    \end{tabular}
  \end{center}
  \end{table}

  Finally, for comparison between the different memory allocations, we report in Table~\ref{table2}
  the performance of Option 4 using all three memory size $m=3, 5,$ and $7$.  The number of
  function evaluations found in Table~\ref{table2} for $m=3$ are different than in Table~\ref{table1} since Table~\ref{table1} reports results only on 46 problems; in contrast, function evaluations for all 49 problems are included in Table~\ref{table2} since using {\small MSS} converged on all 49 problems for all three memory sizes using Option 4.
    
\begin{table}[h!]
  \caption{Cumulative results on 49 CUTEst problems using MSS with Option 4.}
  \label{table2}
  \begin{center}
    \begin{tabular}{ l c c c c c }
       memory & $m=3$ & $m=5$ & $m=7$ \\
       \hline
   Problems solved & 49 & 49 &  49 \\
   Function evaluations (FE)  &  12,457 & 14,613  & 15,232\\
\hline    \end{tabular}
  \end{center}
  \end{table}
  

\noindent
  {\bf Experiment 2.}
One of the most-used indefinite updates for large-scale
optimization is the limited-memory symmetric rank-one ({\small L-SR1}). 
 For  this experiment,
we implemented  a  standard  {\small L-SR1} trust-region method~\cite[Algorithm 6.2]{nocedal2006numerical} and chose the approximate
solution to the subproblem using a truncated-CG (see, for example,~\cite{nocedal2006numerical,conn2000trust}).
The {\small LSR-1} method was initialized using $B_0=\gamma^{SR1}_{k} I$, where $\gamma^{SR1}_k$
could be changed each iteration.  
Updates  to the  quasi-Newton pairs were accepted provided
\begin{equation}\label{sr1-require}|s_k^T(y_k-B_ks_k)|\ge \tau_s\|y_k-B_ks_k\|\|s_k\|,
 \end{equation}
where $\tau_s=1e^{-8}$.  For these experiments, the initial iteration
used $B_0=I$, and for subsequent iterations, the {\small LSR-1} method
was initialized two standard ways: (1) using the {\small BB} initialization and
(2) using  $B_0=I$ for every initialization.  
Since we require
(\ref{sr1-require}) to update the pairs, there was no danger of the
denominator for the {\small BB} initialization becoming zero.  For these
results, the trust-region framework (e.g., parameters) for the {\small MSS} and
{\small L-SR1} methods were identical. The results of this
experiment are presented in
a performance profile and table.

\begin{figure*}[h!]	
		\includegraphics[width=\textwidth]{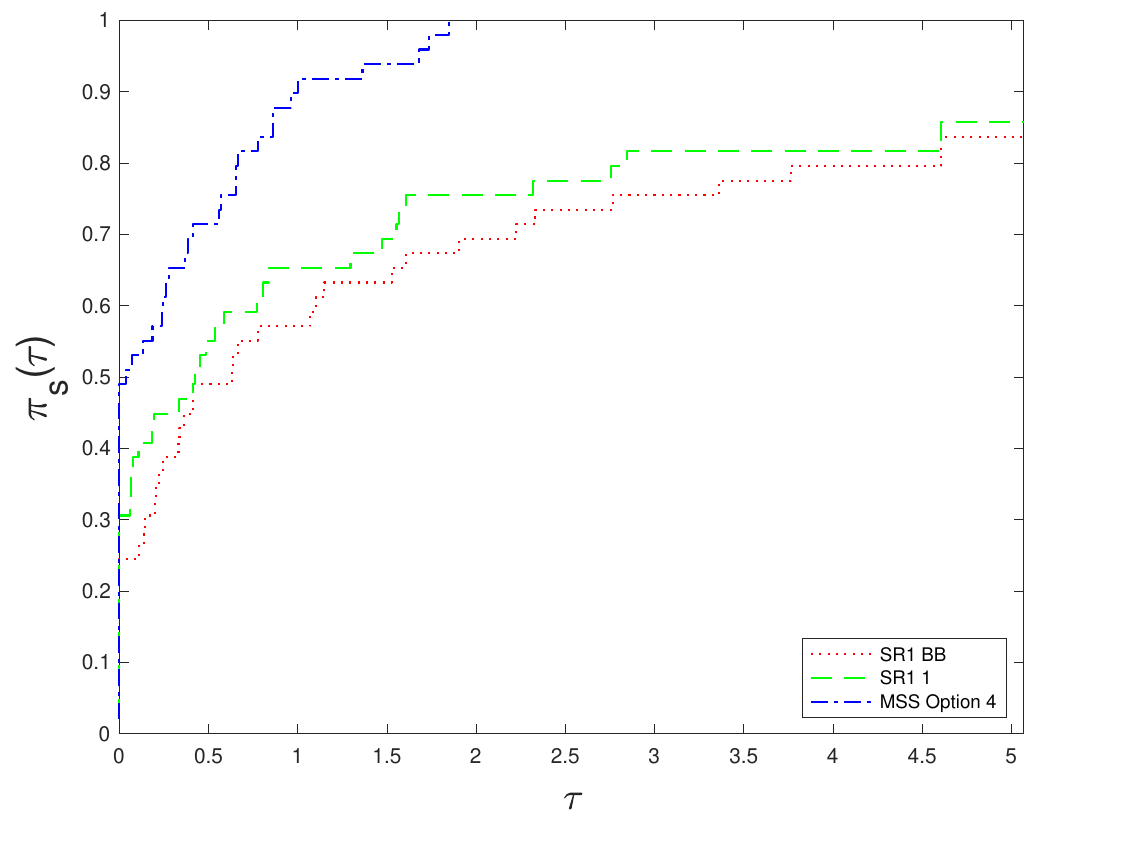}
		\caption{Performance profile comparing function evaluations using L-SR1 and MSS with  $m=3$. "SR1 BB" refers to using L-SR1 together with the BB initialization and "SR1 1" refers to using L-SR1 together with the initialization $B_0=I$ each iteration.}
		\label{sr1}       
\end{figure*}

Figure~\ref{sr1}
contains the performance profile for function evaluations.  From the figure, it
appears that {\small MSS} with Option 4 outperforms both implementations
of {\small L-SR1} in terms of function evaluations using $m=3$.
Table~\ref{table3} contains cumulative results for $m=3$ for {\small MSS} and the two L-SR1 methods. 
Of the 49 problems that at least one method converged on, {\small MSS} converged on all 49; meanwhile,
the {\small L-SR1} methods converged on 41 and 42 depending on the initialization.  There were a total
of 40 problems in which all three method converged; the total number of function evaluations on these
40 problems are reported in the last line of table.  On the problems in which all three methods converged,
{\small MSS} required significantly fewer function evaluations.  In table~\ref{table4}, the number of problems
solved and the number of function evaluations are reported for memory sizes $m=3$, $m=5$ and $m=7$.  In
all cases, {\small MSS} outperformed {\small L-SR1}.  Note that the total number of function evaluations
only counts the 38 problems on which all methods converged using all three memory sizes.

\begin{table}[h!]
  \caption{Cumulative results on 49 CUTEst problems using MSS with Option 4 and L-SR1.}
  \label{table3}
  \begin{center}
    \begin{tabular}{ l c c c c c }
        & MSS & L-SR1 BB &  L-SR1 1 \\
       \hline
   Problems solved & 49 & 41 &  42 \\
   Function evaluations (FE)  &  3,120 & 14,621  & 11,817\\
\hline    \end{tabular}
  \end{center}
  \end{table}

\begin{table}[h!]
  \caption{Cumulative results on 49 CUTEst problems using MSS with Option 4 and L-SR1 for various memory sizes.}
  \label{table4}
  \begin{center}
    \begin{tabular}{ l l c c }
Memory & Solver & Problems solved  & Function evaluations (FE) \\
 & MSS Option 4& 49 &  2,914 \\
$m=3$ & L-SR1 BB & 41 & 14,142\\
 & L-SR1 1 & 42 & 11,643 \\
 \hline
 & MSS Option 4 & 49 &3,069\\
 $m=5$ & L-SR1 BB & 40 & 12,823\\
 & L-SR1 1& 44 & 12,928\\
 \hline
& MSS Option 4 & 49 & 3,005\\
 $m=7$ &L-SR1 BB & 41 & 13,200\\
 & L-SR1 1 & 43 & 11,660\\
\hline    \end{tabular}
  \end{center}
  \end{table}

Figure~\ref{sr1_time} contains the performance profile for total time needed to solve each problem
for the case $m=3$.  From the profile, {\small MSS} with Option 4 takes significantly less time than
 either {\small L-SR1} method.  For the cases $m=5$ and $m=7$, the results are similar.

\begin{figure*}[h!]	
		\includegraphics[width=\textwidth]{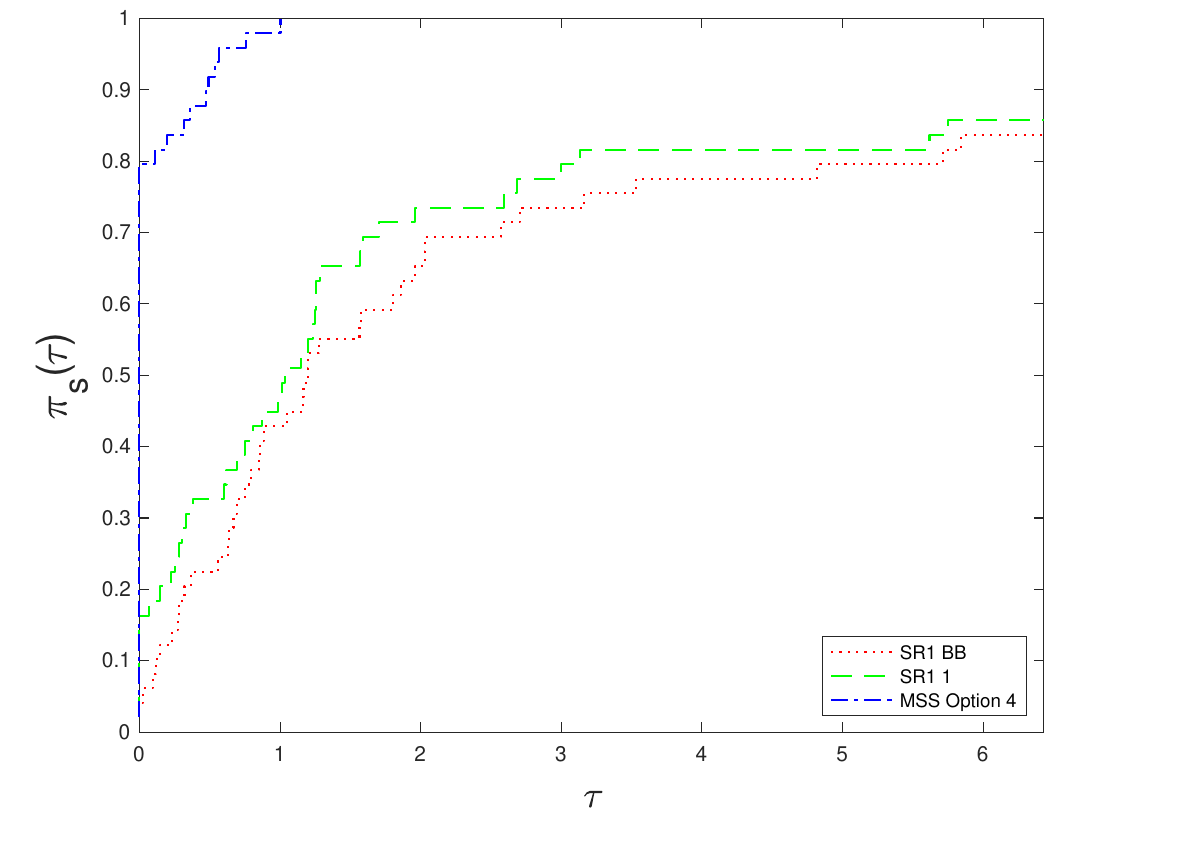}
		\caption{Performance profile comparing runtime using L-SR1 and MSS with  $m=3$. "SR1 BB" refers to using L-SR1 together with the BB initialization and "SR1 1" refers to using L-SR1 together with the initialization $B_0=I$ each iteration.}
		\label{sr1_time}       
\end{figure*}

\medskip
 
\noindent
{\bf Experiment 3.}
In this section, we compare the {\small MSS} method with a dense initialization to the limited-memory Powell-symmetric-Brodyen ({\small PSB}) method.  Similar to the {\small MSS} update, the {\small PSB} update is a rank-2 quasi-Newton update that can generate indefinite Hessian approximations; thus, it is natural to use both methods in a trust-region framework.  Moreover, {\small PSB} admits a compact representation~\cite{powell_psb} enabling us to use the same {\small{OBS}}-like solver that used for {\small MSS}.  In other words, to use {\small PSB}, the only
 changes in Algorithm 1 for {\small MSS} that need to be made to use {\small PSB} are (i) the definition of $M$, (ii) the definition
 of $\Psi$, and (iii) the removal of the requirement that $S$ is full rank in line 18.  (For more details on the {\small PSB} update,
 see, e.g.,~\cite{powell_psb}.)  For these experiments, a limited-memory {\small PSB} method was initialized using (i) $B_0=\gamma_kI$,
 where $\gamma_k=(y_k^Ty_k)/(s_k^Ty_k)$ (i..e, the {\small BB} initialization) and (ii) 
 $B_0=\gamma_k I$,
 where $\gamma_k=(s_k^Ty_k)/(s_k^Ts_k)$, which is denoted as the "o{\small BB}" initialization in the table.  Note that both these initialization are inspired by the results of 
(Theorem~\ref{theoremjan2888-2}) with the stored vectors limited to the most current quasi-Newton pair (i.e., the 
number of stored pairs is one).
 
 Figure~\ref{psb1}--\ref{psb3}
contains the performance profile for function evaluations.  From the figure, it
appears that {\small MSS} with Option 4 outperforms both implementations
of {\small L-PSB} in terms of function evaluations using $m=3$, $m=5$, and $m=7$, respectively.  
Table~\ref{tablePSB} lists cumulative results for these different memory sizes and initializations.
To compute the total number of function evaluations in the table, only the problems on which all three methods
using all three memory sizes were considered--this amounted to 45 problems.  From the table, {\small MSSM}
performs best on this test set for smaller memory sizes; in contrast, {\small L-PSB} with the {\small BB} initialization
solved more problems with smaller memory sizes but the total number of function evaluations on the 45 problems
was smallest with $m=5$.  Meanwhile, {\small L-PSB} with the o{\small BB} initialization performed better with
larger memory sizes.

\begin{figure*}[h!]	
		\includegraphics[width=\textwidth]{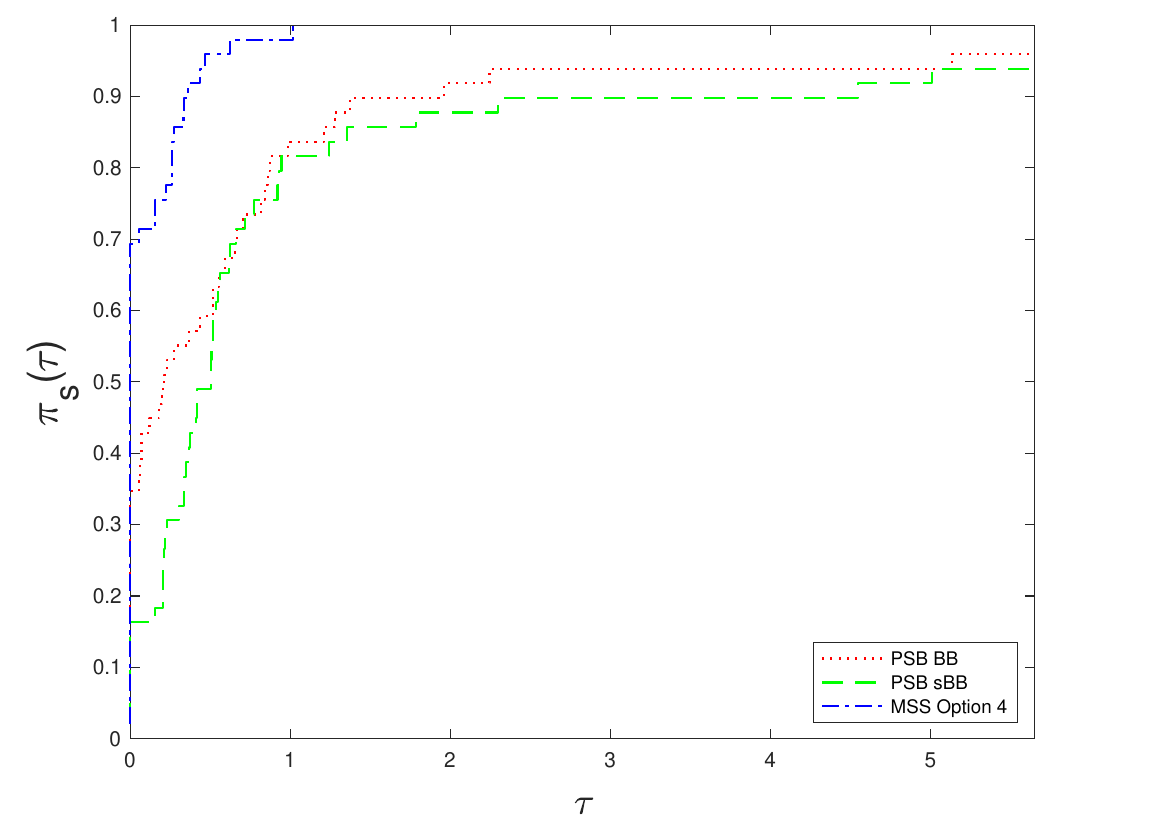}
		\caption{Performance profile comparing function evaluations using L-PSB and MSS with  $m=3$. "PSB BB" refers to using L-PSB together with the BB initialization and "PSB sBB" refers to using L-PSB together with the initialization $B_0=(s_k^Ty_k)/(s_k^Ts_k)I$ each iteration.}
		\label{psb1}       
\end{figure*}

\begin{figure*}[h!]	
		\includegraphics[width=\textwidth]{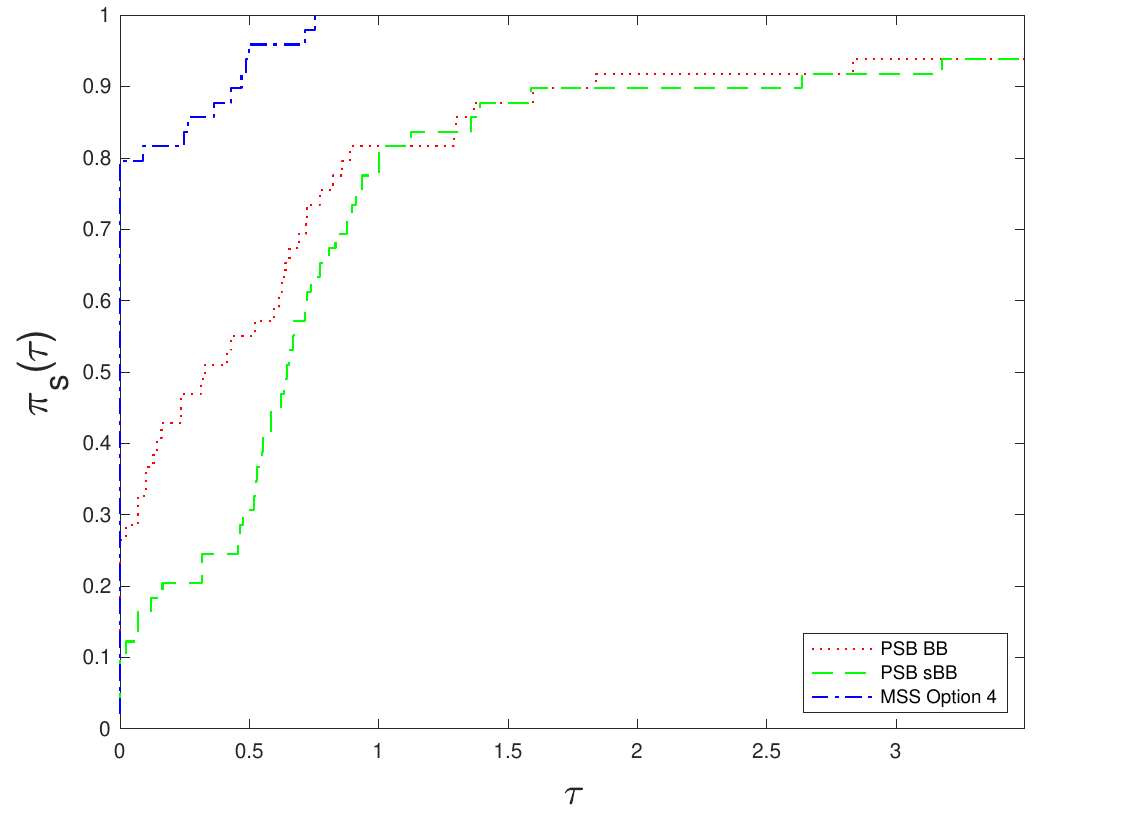}
		\caption{Performance profile comparing function evaluations using L-PSB and MSS with  $m=5$. "PSB BB" refers to using L-PSB together with the BB initialization and "PSB sBB" refers to using L-PSB together with the initialization $B_0=(s_k^Ty_k)/(s_k^Ts_k)I$ each iteration.}
		\label{psb2}       
\end{figure*}

\begin{figure*}[h!]	
		\includegraphics[width=\textwidth]{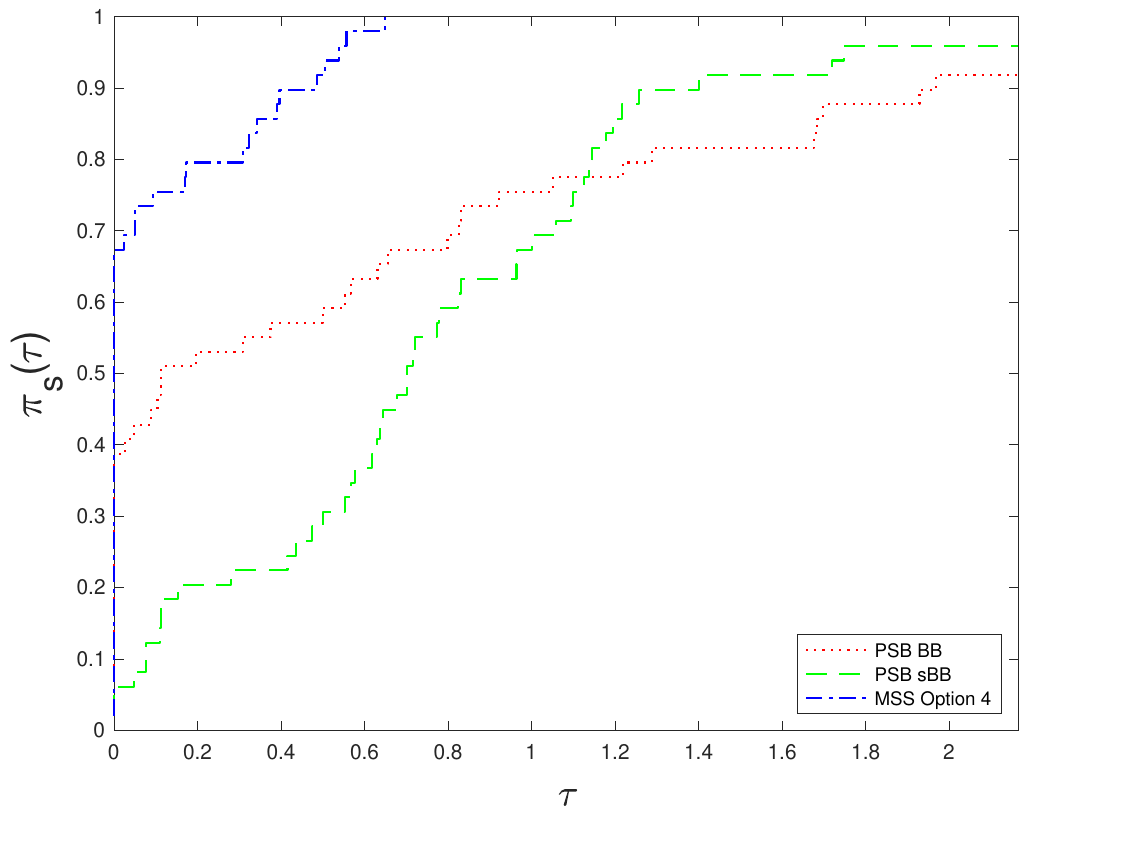}
		\caption{Performance profile comparing function evaluations using L-PSB and MSS with  $m=7$. Here, "PSB BB" refers to using L-PSB together with the BB initialization and "PSB sBB" refers to using L-PSB together with the initialization $B_0=(s_k^Ty_k)/(s_k^Ts_k)I$.}
		\label{psb3}       
\end{figure*}

\begin{table}[h!]
  \caption{Cumulative results on 49 CUTEst problems using MSS with Option 4 and L-PSB for various memory sizes.  Here, "PSB BB" refers to using L-PSB together with the BB initialization and "PSB sBB" refers to using L-PSB together with the initialization $B_0=(s_k^Ty_k)/(s_k^Ts_k)I$.}
  \label{tablePSB}
  \begin{center}
      \begin{tabular}{ l l c c }
  Memory & Solver & Problems solved  & Function evaluations (FE) \\
 & MSS Option 4& 49 & 5927  \\
$m=3$ & L-PSB BB & 47 &11,336 \\
 & L-PSB oBB&46  & 14.529 \\
 \hline
 & MSS Option 4 & 49&\ 6,290\\
 $m=5$ & L-PSB BB &46  & 11,157\\
 & L-PSB oBB& 46 & 10,705 \\
 \hline
& MSS Option 4 & 49 & 6,648 \\
 $m=7$ &L-PSB BB & 45& 12,951\\
 & L-PSB oBB & 47 & 10,694\\
  \hline    \end{tabular}
  \end{center}
  \end{table}

Finally, the performance profile for total runtime on the test set of 49 problems with $m=3$ for {\small MSS} with Option 4
and the two {\small PSB} methods are found in Figure~\ref{psb_time3}.   This figure shows that {\small MSS} with Option 4
was the faster of the three methods on this test set.

\begin{figure*}[h!]	
		\includegraphics[width=\textwidth]{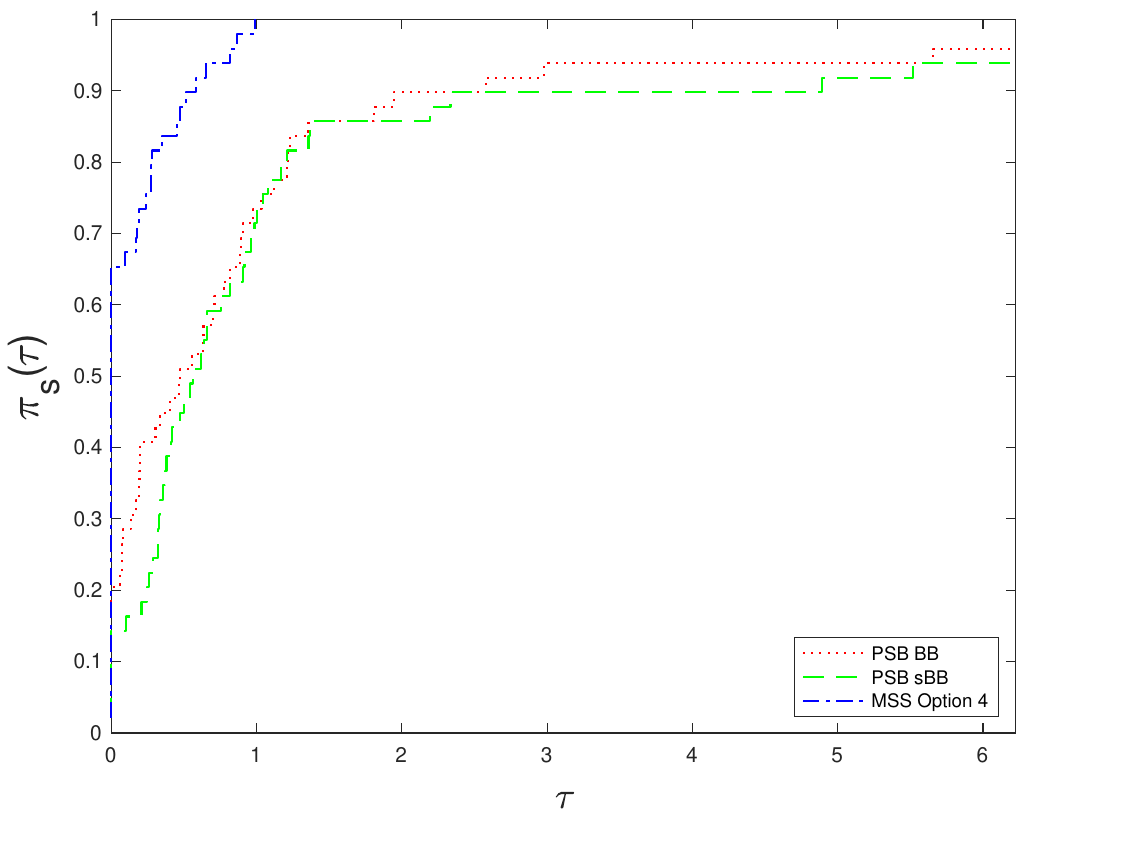}
		\caption{Performance profile runtime using L-PSB and MSS with  $m=3$. Here, "PSB BB" refers to using L-PSB together with the BB initialization and "PSB sBB" refers to using L-PSB together with the initialization $B_0=(s_k^Ty_k)/(s_k^Ts_k)I$.}
		\label{psb_time3}       
\end{figure*}

\section{Concluding Remarks}
In this paper, we demonstrated how a dense initialization can be used
with a {\small MSS} method.  In numerical experiments on {\small CUTE}st
test problems, the dense initialization performs better than standard constant
diagonal initializations.  Results suggest that this method performs
well with small memory sizes (i.e., $m=3$ outperformed the other memory
sizes and the number of function evaluations generally increased as
$m$ increased).
Moreover, with small $m$, the proposed method generally
outperforms a basic {\small L-SR1} trust-region method.  Further
research includes considering other symmetrization, choices for the
two parameters $\zeta$ and $\zeta^C$, and trust-region norms
(including the so-called \emph{shape-changing norm} first proposed
in~\cite{BYuan02}).  The results of future research will be
submitted in subsequent papers.

\subsection{Acknowledgement.}
The authors would like to thank the referees. In particular, one referee provided
an explanation that led to the inclusion of Lemma~\ref{lemma-Y} and significantly improved
the entire section.  In fact, part of the proof is based on an observation
by the referee.

\section*{Funding} J.~B. Erway’s research work was funded by NSF Grant IIS-1741264.

\newpage
\bibliographystyle{unsrt}
\bibliography{references}

\begin{thebibliography}{10}

\bibitem{burdakov1983methods}
Oleg~P Burdakov.
\newblock Methods of the secant type for systems of equations with symmetric
  jacobian matrix.
\newblock {\em Numerical functional analysis and optimization}, 6(2):183--195,
  1983.

\bibitem{burdakov2002limited}
Oleg~P Burdakov, Jos{\'e}~Mario Mart{\'\i}nez, and Elvio~A Pilotta.
\newblock A limited-memory multipoint symmetric secant method for bound
  constrained optimization.
\newblock {\em Annals of Operations Research}, 117(1-4):51--70, 2002.

\bibitem{schnabel1983quasi}
Robert~B Schnabel.
\newblock Quasi-newton methods using multiple secant equations.
\newblock Technical report, Colorado University at Boulder Department of
  Computer Science, 1983.

\bibitem{denseInit}
Johannes Brust, Oleg Burdakov, Jennifer~B. Erway, and Roummel~F. Marcia.
\newblock A dense initialization for limited-memory quasi-{N}ewton methods.
\newblock {\em Computational Optimization and Applications}, 74(1):121--142,
  2019.

\bibitem{cutest}
Nicholas I.~M. Gould, Dominique Orban, and Philippe~L. Toint.
\newblock Cutest: a constrained and unconstrained testing environment with safe
  threads for mathematical optimization.
\newblock {\em Computational Optimization and Applications}, 60(3):545--557,
  2015.

\bibitem{brust2018large}
Johannes~Joachim Brust.
\newblock {\em Large-Scale Quasi-Newton Trust-Region Methods: High-Accuracy
  Solvers, Dense Initializations, and Extensions}.
\newblock PhD thesis, UC Merced, 2018.

\bibitem{dennis77}
J.~E. Dennis, Jr. and Jorge~J. Moré.
\newblock Quasi-newton methods, motivation and theory.
\newblock {\em SIAM Review}, 19(1):46--89, 1977.

\bibitem{Representations}
Richard~H. Byrd, Jorge Nocedal, and Robert~B. Schnabel.
\newblock Representations of quasi-newton matrices and their use in limited
  memory methods.
\newblock {\em Mathematical Programming}, 63(1):129--156, 1994.

\bibitem{nocedal2006numerical}
Jorge Nocedal and Stephen Wright.
\newblock {\em Numerical optimization}.
\newblock Springer Science \& Business Media, 2006.

\bibitem{conn2000trust}
Andrew~R Conn, Nicholas~IM Gould, and Philippe~L Toint.
\newblock {\em Trust region methods}.
\newblock SIAM, 2000.

\bibitem{Gay81}
David~M Gay.
\newblock Computing optimal locally constrained steps.
\newblock {\em SIAM Journal on Scientific and Statistical Computing},
  2(2):186--197, 1981.

\bibitem{more1983computing}
Jorge~J Mor{\'e} and Danny~C Sorensen.
\newblock Computing a trust region step.
\newblock {\em SIAM Journal on Scientific and Statistical Computing},
  4(3):553--572, 1983.

\bibitem{MSS1}
Jennifer~B. Erway and Roummel~F. Marcia.
\newblock Algorithm 943: {MSS}: Matlab software for l-bfgs trust-region
  subproblems for large-scale optimization.
\newblock {\em ACM Trans. Math. Softw.}, 40(4), 2014.

\bibitem{burdakov2017efficiently}
Oleg Burdakov, Lujin Gong, Spartak Zikrin, and Ya-xiang Yuan.
\newblock On efficiently combining limited-memory and trust-region techniques.
\newblock {\em Mathematical Programming Computation}, 9(1):101--134, 2017.

\bibitem{brust2017solving}
Johannes Brust, Jennifer~B Erway, and Roummel~F Marcia.
\newblock On solving {L-SR1} trust-region subproblems.
\newblock {\em Computational Optimization and Applications}, 66(2):245--266,
  2017.

\bibitem{simaxErway}
Jennifer~B Erway and Roummel~F Marcia.
\newblock On efficiently computing the eigenvalues of limited-memory
  quasi-{N}ewton matrices.
\newblock {\em SIAM Journal on Matrix Analysis and Applications},
  36:1338--1359, 2015.

\bibitem{bb88}
Jonathan Barzilai and Jonathan~M. Borwein.
\newblock Two-point step size gradient methods.
\newblock {\em IMA Journal of Numerical Analysis}, 8(1):141--148, 01 1988.

\bibitem{Oleg2021}
Johannes Brust, Oleg Burdakov, Jennifer Erway, and Roummel Marcia.
\newblock Algorithm xxx: {SC-SR1}: {MATLAB} software for solving shape-changing
  {L-SR1} trust-region subproblems.
\newblock Technical report, arXiv:1607.03533 [math.OC], 2021.

\bibitem{matlab}
Desmond~J. Higham and Nicholas~J. Higham.
\newblock {\em {MATLAB} Guide}.
\newblock SIAM, third edition, 2017.

\bibitem{higham02}
Nicholas~J. Higham.
\newblock {\em Accuracy and Stability of Numerical Algorithms}.
\newblock SIAM, 2nd edition, 2002.

\bibitem{Nocedal80}
Jorge Nocedal.
\newblock Updating quasi-newton matrices with limited storage.
\newblock {\em Mathematics of Computation}, 35(151):773--782, 1980.

\bibitem{DolanMore02}
Elizabeth~D. Dolan and Jorge~J. Mor\'{e}.
\newblock Benchmarking optimization software with performance profiles.
\newblock {\em Mathematical Programming}, 91:201--213, 2002.

\bibitem{powell_psb}
Michael~J.D. Powell.
\newblock A new algorithm for unconstrained optimization.
\newblock In J.B. Rosen, O.L. Mangasarian, and K.~Ritter, editors, {\em
  Nonlinear Programming}, pages 31--65. Academic Press, 1970.

\bibitem{BYuan02}
Oleg Burdakov and Ya-xiang Yuan.
\newblock On limited-memory methods with shape changing trust region.
\newblock In {\em Proceedings of the First International Conference on
  Optimization Methods and Software}, page p. 21, 2002.

\end{thebibliography}

\end{document}